\documentclass[11pt]{amsart}
\usepackage{amssymb}
\usepackage{url}
\usepackage{enumerate}
\usepackage{tikz}
\newtheorem{lemma}{\bf Lemma}[section] % putting the percent symbol in front of "[section" prevents the section number from being put in front of each lemma
\newtheorem{theorem}[lemma]{\bf Theorem}
\newtheorem{corollary}[lemma]{\bf Corollary}
\newtheorem{proposition}[lemma]{\bf Proposition}

\newtheorem{notation}[lemma]{\bf Notation}

\newtheorem{observation}[lemma]{\bf Observation}

\DeclareMathOperator{\h}{h}

\begin{document}
\parskip = 0mm
\title[The Endomorphism Ordered Set]{Does the Endomorphism Ordered Set of a Finite Ordered Set Determine the Ordered Set?\\ \small The Cases of Height $1$ and ``Trebled" Ordered Sets}
%\author[ideas of Tim Campion]{ideas of Tim Campion}
%\address{{\tt timcampionmath@gmail.com}}
\author[Jonathan David Farley]{Jonathan David Farley}
\address{Department of Mathematics, Morgan State University, 1700 E. Cold Spring Lane, Baltimore, MD 21251, United States of America, {\tt lattice.theory@gmail.com}}

\keywords{(Partially) ordered set, order-preserving map, distributive lattice, exponentiation.}

\subjclass[2020]{06A06, 06D05}

\begin{abstract}
For ordered sets $X$ and $Y$, let $Y^X$ denote the ordered set of order-preserving maps from $X$ to $Y$, where $f\le g$ in $Y^X$ if $f(x)\le g(x)$ for all $x\in X$.

Let $P$ and $Q$ be finite ordered sets such that $P^P\cong Q^Q$.  It is proven that $P\cong Q$ if $P$ or $Q$ has height at most $1$ or if $P$ and $Q$ are ordered sets of the following form: replace each element of an ordered set with a three-element antichain. The latter is an elaboration of a proof of Tim Campion.
\end{abstract}

\maketitle

%-------------------- new commands ---------------------

\def\Qa{\mathbb{Q}_0}
\def\Qb{\mathbb{Q}_1}
\def\Q{\mathbb{Q}}
\def\card{{\rm card}}
\parskip = 2mm
\parindent = 10mm
\def\Part{{\rm Part}}
\def\P{{\mathcal P}}
\def\Eq{{\rm Eq}}
\def\cld{Cl_\tau(\Delta)}
\def\Csing{{\mathcal C}_{\{*\}}}
\def\Cftwo{{\mathcal C}_{{\rm fin}\rangle1}}
\def\Cinf{{\mathcal C}_{\infty}}
\def\Pcf{{\mathcal P}_{\rm cf}}
\def\Fn{{\mathcal F}_n}
\def\proof{{\it Proof. }}
%\def\qed{\hfill{}$\Box$}

%%%%%%%%%%%%%%%%%%%%% end of commands

\vspace*{-4mm} %----\rangle !! Shift text upwards closer to Abstract !!

%===================== main text ============================
\section{Notation and Motivation}
Notation and definitions not explained in this article can be found in \cite{GraAA}, \cite{DavPriJB}, \cite{SchAF}, \cite{FarBC} and the references therein.  Ordered sets are non-empty. The survey \cite{DufHD} lists many of the important results in the arithmetic of ordered sets.

For an ordered set $P$ and $p\in P$, 
$$
\overset{\circ}{\downarrow} p:=(\downarrow p)-\{p\}
$$
\noindent and 
$$
\underset{\circ}{\uparrow} p:=(\uparrow p)-\{p\}.
$$

We use the notion of {\sl connected} ordered sets and connected {\sl components} of ordered sets and Hashimoto's Refinement Theorem \cite[Section 12.4 and Definitions 2.43 and 2.47]{SchAF}.  The symbol ``$\equiv$" means ``in the same connected component as".  {\sl Directly-irreducible} ordered sets are defined on \cite[p. 436]{FarBC}.

For ordered sets $X$ and $Y$, let $Y^X$ denote the ordered set of order-preserving maps from $X$ to $Y$, ordered as follows: If $f,g\in Y^X$, $f\le g$ if $f(x)\le g(x)$ for all $x\in X$; for $y\in Y$, we denote the constant map by $\langle y\rangle$; $f\lessdot g$ ($f$ is a lower cover of $g$) if there exists $x\in X$ such that $f(x)\lessdot g(x)$ and $f\restriction_{X-\{x\}}=g\restriction_{X-\{x\}}$ \cite[Exercise 1.27]{DavPriJB}.  If $f:P\to P$ is a function and $p,x\in P$, then $f_{p\mapsto x}:P\to P$ is the function that sends $p$ to $x$ but is otherwise the same as $f$.  

For ordered sets $X$, $Y$, and $Z$, $Z^{Y\times X}\cong (Z^Y)^X$, where ``$\cong$" denotes order-isomorphism \cite[Chapter III, Section 2, Theorem 2(4)]{BirFG}.

For $n\in\mathbb N_0$, let $[n]:=\{1,\dots,n\}$; ${\bf n}:=\{0,1,\dots,n-1\}$ is the $n$-element {\sl chain} or totally-ordered set; ${\bf 2}^{\bf 2}$ is order-isomorphic to $\bf 3$; every finite distributive lattice $L$ has the form ${\bf 2}^P$ where $P$ is the dual ${\mathcal J}(L)^\partial$ of the ordered set of join-irreducibles of $L$ \cite[Exercise 5.19(iii)]{DavPriJB}.  The {\sl height} $\h(P)$ of a finite ordered set $P$ is the size of the largest chain in $P$ minus one; for finite ordered sets $X$ and $Y$, $\h(X\times Y)=\h(X)+\h(Y)$ \cite[Chapter I, Section 9, Exercise 4(a)]{BirDH}; $\h(Y^X)=|X|\h(Y)$ \cite[Lemma 15(4)]{FarBC}; in fact, every maximum-sized chain in $Y^X$ has as endpoints a map that is constant on all components; the notation $\mathcal C(Y^X)$ is defined on \cite[p. 436]{FarBC}; we really should use the notation $\mathcal C(X,Y)$.  We will often use \cite[Proposition 1 and Theorem 2]{FarBC}, noting that the order-isomorphisms are canonical.  {\sl Absolute $\mathcal C$-indecomposability} is defined on \cite[p. 436]{FarBC}.  

If $F_3$ is the $3$-element {\sl fence} $\{a,b,c\}$ where $a<b$; $b>c$; and no other strict comparabilities hold, then $F_3^{F_3}$ is the $11$-element ordered set in Figure 1.1.

\begin{center}

    \begin{tikzpicture}[scale=.45]

    \draw[fill] (0,12) circle (.05cm);
    \draw (0,12) -- (-4,8);
    \draw (0,12) -- (-2,8);
    \draw (0,12) -- (4,8);
    \draw (0,12) -- (2,8);
    \draw[fill] (-4,8) circle (.05cm);
    \draw[fill] (-2,8) circle (.05cm);
    \draw[fill] (4,8) circle (.05cm);
    \draw[fill] (2,8) circle (.05cm);

    \draw (-4,8) -- (-4,4);
    \draw (-4,8) -- (2,4);
    \draw (-2,8) -- (-4,4);
    \draw (-2,8) -- (-2,4);
    \draw (2,8) -- (-2,4);
    \draw (2,8) -- (4,4);
    \draw (4,8) -- (2,4);
    \draw (4,8) -- (4,4);
    \draw[fill] (-4,4) circle (.05cm);
    \draw[fill] (-2,4) circle (.05cm);
    \draw[fill] (4,4) circle (.05cm);
    \draw[fill] (2,4) circle (.05cm);

    \draw (-4,4) -- (-4,0);
    \draw (4,4) -- (4,0);
    \draw[fill] (-4,0) circle (.05cm);
    \draw[fill] (4,0) circle (.05cm);

    \draw (-1,-1) node {\bf Figure 1.1. $F_3^{F_3}$};

    \end{tikzpicture}
    
\end{center}
If $C_4$ is the $4$-element {\sl crown} $\{a,b,c,d\}$ where $a,c<b,d$ and no other strict comparabilities hold, then $C_4^{C_4}$ is the ordered set in Figure 1.2 (taken from \cite[p. 54]{DufGH}).  

\begin{center}

    \begin{tikzpicture}[scale=.45]

    \draw[fill] (-4,0) circle (.05cm);
    \draw[fill] (4,0) circle (.05cm);
    \draw[fill] (-7,1) circle (.05cm);
    \draw[fill] (-5,1) circle (.05cm);
    \draw[fill] (-3,1) circle (.05cm);
    \draw[fill] (-1,1) circle (.05cm);
    \draw[fill] (7,1) circle (.05cm);
    \draw[fill] (5,1) circle (.05cm);
    \draw[fill] (3,1) circle (.05cm);
    \draw[fill] (1,1) circle (.05cm);

    \draw[fill] (-7,2) circle (.05cm);
    \draw[fill] (-6,2) circle (.05cm);
    \draw[fill] (-5,2) circle (.05cm);
    \draw[fill] (-3,2) circle (.05cm);
    \draw[fill] (-2,2) circle (.05cm);
    \draw[fill] (-1,2) circle (.05cm);

    \draw[fill] (7,2) circle (.05cm);
    \draw[fill] (6,2) circle (.05cm);
    \draw[fill] (5,2) circle (.05cm);
    \draw[fill] (3,2) circle (.05cm);
    \draw[fill] (2,2) circle (.05cm);
    \draw[fill] (1,2) circle (.05cm);

    \draw[fill] (-7,3) circle (.05cm);
    \draw[fill] (-5,3) circle (.05cm);
    \draw[fill] (-3,3) circle (.05cm);
    \draw[fill] (-1,3) circle (.05cm);

    \draw[fill] (7,3) circle (.05cm);
    \draw[fill] (5,3) circle (.05cm);
    \draw[fill] (3,3) circle (.05cm);
    \draw[fill] (1,3) circle (.05cm);

    \draw[fill] (-4,4) circle (.05cm);
    \draw[fill] (4,4) circle (.05cm);

    \draw[fill] (8,2) circle (.05cm);
    \draw[fill] (9,2) circle (.05cm);
    \draw[fill] (10,2) circle (.05cm);
    \draw[fill] (11,2) circle (.05cm);

    \draw (-4,0) -- (-7,1);
    \draw (-4,0) -- (-5,1);
    \draw (-4,0) -- (-3,1);
    \draw (-4,0) -- (-1,1);

    \draw (4,0) -- (7,1);
    \draw (4,0) -- (5,1);
    \draw (4,0) -- (3,1);
    \draw (4,0) -- (1,1);

    \draw (-7,1) -- (-7,2);
    \draw (-7,1) -- (-2,2);

    \draw (-5,1) -- (-7,2);
    \draw (-5,1) -- (-3,2);

    \draw (-3,1) -- (-3,2);
    \draw (-3,1) -- (1,2);

    \draw (-1,1) -- (-2,2);
    \draw (-1,1) -- (1,2);

    \draw (7,1) -- (7,2);
    \draw (7,1) -- (6,2);

    \draw (5,1) -- (7,2);
    \draw (5,1) -- (3,2);

    \draw (3,1) -- (3,2);
    \draw (3,1) -- (-1,2);

    \draw (1,1) -- (6,2);
    \draw (1,1) -- (-1,2);

    \draw (-7,2) -- (-7,3);
    \draw (-7,2) -- (-5,3);

    \draw (-6,2) -- (-7,3);
    \draw (-6,2) -- (-1,3);

    \draw (-5,2) -- (-5,3);
    \draw (-5,2) -- (-3,3);

    \draw (-1,2) -- (-3,3);
    \draw (-1,2) -- (-1,3);

    \draw (1,2) -- (1,3);
    \draw (1,2) -- (3,3);

    \draw (2,2) -- (1,3);
    \draw (2,2) -- (7,3);

    \draw (5,2) -- (3,3);
    \draw (5,2) -- (5,3);

    \draw (7,2) -- (5,3);
    \draw (7,2) -- (7,3);

    \draw (-7,3) -- (-4,4);
    \draw (-5,3) -- (-4,4);
    \draw (-3,3) -- (-4,4);
    \draw (-1,3) -- (-4,4);

    \draw (7,3) -- (4,4);
    \draw (5,3) -- (4,4);
    \draw (3,3) -- (4,4);
    \draw (1,3) -- (4,4);

    \draw (0,-1) node {\bf Figure 1.2. $C_4^{C_4}$ \cite[p. 54]{DufGH}};

    \end{tikzpicture}
    
\end{center}

In 1984, Duffus posed the following problem:

If $P$ and $Q$ are finite ordered sets and $P^P\cong Q^Q$, must we have $P\cong Q$? \cite[p. 90]{DufHD}  The answer is ``yes" if $P$ is an antichain or, less trivially, if both $P$ and $Q$ are connected \cite[Theorem]{DufWilGI} or, more generally, if $P$ {\it or} $Q$ is connected \cite[Lemma 15(2) and Theorem 16]{FarBC}.

In this article, we show the answer is ``yes" if $P$ or $Q$ has height $1$ (Section 2) or if $P$ and $Q$ are ``trebled" ordered sets (Sections 3 and 4): lexicographic sums where each summand is a three-element antichain.  Informally, replace each element $y$ of an ordered set $Y$ with a three-element set consisting of ``triplets" $y$, $\widetilde y$, and $\widetilde{\widetilde y}$.  Call the trebled ordered set $\widetilde Y$; e.g., the covering graph of $\widetilde{\bf 2}$ is $K_{3,3}$.

Sections 3 and 4 are independent of Section 2.

\section{Height $1$ ordered sets}

In this section, we prove
\begin{theorem} Let $P$ and $Q$ be finite ordered sets where $\h(P)\le1$.  If $P^P\cong Q^Q$, then $P\cong Q$.
\end{theorem}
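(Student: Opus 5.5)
We first compare sizes and heights. Since $P^P\cong Q^Q$, the heights agree: $|P|\h(P)=\h(P^P)=\h(Q^Q)=|Q|\h(Q)$. If $\h(P)=0$, then $P$ is an antichain, and the known antichain case applies. So we may assume $\h(P)=1$, and then $|P|=|Q|\h(Q)$. If $P$ or $Q$ is connected, we are done by \cite[Lemma 15(2) and Theorem 16]{FarBC}. Hence we assume both are disconnected.

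Next we recover the components. Write $P=P_1+\dots+P_m$ and $Q=Q_1+\dots+Q_n$ as sums of connected components. Then
$$P^P\cong\prod_i P^{P_i}, \qquad Q^Q\cong\prod_j Q^{Q_j}.$$
For connected $X$, the ordered set $Y^X$ decomposes by which component of $Y$ receives $X$, so $P^{P_i}\cong\sum_k P_k^{P_i}$. Using \cite[Proposition 1 and Theorem 2]{FarBC}, that is, the description of $\mathcal C(Y^X)$ and the canonical isomorphisms together with Hashimoto's Refinement Theorem, we try to read off invariants from the connected-component structure of $P^P$. The number of components of $P^P$ is $m^m$, so it equals $n^n$, which forces $m=n$. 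Components of maximum height correspond to maximal chains whose endpoints are maps constant on components. This should let us identify the factor structure, namely the multiset of the connected components $\prod_i P_{\sigma(i)}^{P_i}$ for $\sigma:[m]\to[m]$.

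The key step is to compare the height of each component. The component indexed by $\sigma$ has height $\sum_i|P_i|\h(P_{\sigma(i)})$. Components of $P$ of height $0$ are singletons, and the others have height $1$. Taking $\sigma$ constant at a height-$1$ component gives height $|P|$, which must be maximal. On the $Q$ side, the maximum height is $|Q|\h(Q)$, so $\h(Q)$ is bounded by $1$ or determined. I would then show $\h(Q)=1$ as follows. If $\h(Q)\ge2$, then $|Q|<|P|$, while counting elements, e.g. $|P^P|$ via the constant-map components, should give a contradiction. Once both have height $1$, the components with $\sigma$ constant, $\prod_i P_k^{P_i}\cong P_k^P$, are distinguished by being those containing constant maps, in canonical form. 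Using cancellation for connected bases, as in Duffus--Wille style arguments applied to $P_k^{P}$ versus $Q_l^{Q}$, we then match the $P_k$ to the $Q_l$ and conclude $P\cong Q$.

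The main obstacle is this last step: showing that the isomorphism sends "constant-type" components $P_k^P$ to $Q_l^Q$, and then that $P_k^P\cong Q_l^Q$ yields $P_k\cong Q_l$ with matched multiplicities. Both sides have height $1$ and $|P|=|Q|$. I would first try to characterize constant-type components intrinsically, as the components containing elements that are endpoints of maximum-length chains whose number of such endpoints is maximal. Failing that, I would use the absolute $\mathcal C$-indecomposability results of \cite{FarBC} for connected height-$1$ bases.
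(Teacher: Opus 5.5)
\textbf{There are genuine gaps: the proposal does not prove the theorem.}

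\textbf{The claim $m=n$.} Your count ``$P^P$ has $m^m$ components, so $m=n$'' rests on a false premise. $P_k^{P_i}$ need not be connected even when $P_k$ and $P_i$ are connected of height $1$: in $C_4^{C_4}$ (Figure 1.2) the four automorphisms of the crown are isolated points. Only the summands $\prod_i\mathcal C(P_{\sigma(i)}^{P_i})$ are canonical. What is intrinsic is the set of components containing elements of maximum height, and counting those gives only $k^m=\ell^n$, where $k,\ell$ are the numbers of components of maximum height (Proposition 2.7). The paper gets $m=n$ later, by counting directly irreducible factors of one summand via Hashimoto's Refinement Theorem. That count needs the maximum-height components of $Q$ to be directly irreducible, i.e.\ it needs $\h(Q)=1$ first.

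\textbf{The step $\h(Q)=1$.} This is the heart of the theorem, and you give no argument for it. Heights only give $|P|=|Q|\h(Q)$, and ``counting elements should give a contradiction'' is not a proof; no obvious counting contradiction is available either. The paper proves it structurally in Lemma 2.11, arguing as in Proposition 2.3:
\begin{enumerate}
\item Take a component $Q_0$ of $Q$ of maximum height and refine the component of $\langle q_0\rangle$. This produces $\mathcal C(B^{Q_0})\cong\mathcal C(A^{P_j})$, with $A$ a nontrivial connected direct factor of a height-$1$ component of $P$.
\item \cite[Theorem 10]{FarBC} gives $A\cong\mathcal C(E^X)$, $Q_0\cong X\times Z$, and $P_j\cong Y\times Z$.
\item By Lemma 2.6, $1=\h(A)=|X|\h(E)$, which forces $|X|=1$. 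So $Q_0$ is a direct factor of $P_j$, and $\h(Q)\le 1$.
\end{enumerate}

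\textbf{The matching step.} What you call the main obstacle aims at something false. An isomorphism need not carry the summands $\prod_i\mathcal C(P_k^{P_i})$ that contain constants to summands that contain constants. In $({\bf2}+{\bf2})^{{\bf2}+{\bf2}}\cong 4\cdot({\bf3}\times{\bf3})$ the four isomorphic components can be permuted freely, but only two of them contain constants. So no intrinsic characterization of ``constant-type'' components exists, and none is needed. Once $\h(Q)=1$, every nontrivial component of $P$ or $Q$ is directly irreducible (Lemma 2.10). Then, whichever summand $\prod_j\mathcal C(Q_{\chi(j)}^{Q_j})$ the summand $\prod_i\mathcal C(P_1^{P_i})$ is sent to, both are products of directly irreducible factors \cite[Theorem 2(3)]{FarBC}. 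Hashimoto's theorem pairs $\mathcal C(P_1^{P_i})\cong\mathcal C(Q_{\chi(j_i)}^{Q_{j_i}})$ with $i\mapsto j_i$ injective. \cite[Theorem 4]{FarBC}, together with equality of heights, then yields $P_i\cong Q_{j_i}$; your fallback of absolute $\mathcal C$-indecomposability (Lemma 2.12) would also serve here. With $m=n$, the singleton components match as well (Theorems 2.9 and 2.13). Your proposal gestures at this route but does not carry it out.
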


\begin{lemma} Let $A$ and $B$ be finite ordered sets such that $A$ is connected.  Then $\mathcal C(A^B)$ is connected.
\end{lemma}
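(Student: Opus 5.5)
The plan is to identify $\mathcal C(A^B)$ with a direct power of $A$ and then use the fact that a finite direct product of connected ordered sets is connected. I will work with the reading, from \cite[p. 436]{FarBC}, that $\mathcal C(A^B)$ is the subposet of $A^B$ consisting of the order-preserving maps that are constant on each connected component of $B$. This is the family containing the endpoints of the maximum-sized chains mentioned in Section 1. I have not re-checked that this matches \cite{FarBC} exactly, so this identification is the step to verify first.

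First, let $B_1,\dots,B_k$ be the connected components of $B$. Any function $B\to A$ that is constant on each $B_i$ is automatically order-preserving, because comparable elements of $B$ lie in the same component. So the map sending $f\in\mathcal C(A^B)$ to $(f(b_1),\dots,f(b_k))$, with $b_i\in B_i$ chosen arbitrarily, is a bijection onto $A^k=A\times\cdots\times A$. Pointwise order on $\mathcal C(A^B)$ matches the coordinatewise order on $A^k$, so this bijection is an order-isomorphism $\mathcal C(A^B)\cong A^k$. This is the canonical isomorphism $A^{B'}\cong A^k$, where $B'$ is the $k$-element antichain of components.

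Second, show that $A^k$ is connected. Take $(x_1,\dots,x_k)$ and $(y_1,\dots,y_k)$ in $A^k$. Since $A$ is connected, there is a fence $x_1=a_0,a_1,\dots,a_m=y_1$ in $A$ with consecutive terms comparable. Varying only the first coordinate along this fence, with the other coordinates held fixed, gives a fence in $A^k$ from $(x_1,x_2,\dots,x_k)$ to $(y_1,x_2,\dots,x_k)$. Repeating this coordinate by coordinate reaches $(y_1,\dots,y_k)$. Hence $A^k$, and therefore $\mathcal C(A^B)$, is connected.

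The only real obstacle is the definitional one: making sure $\mathcal C(A^B)$ in \cite{FarBC} is exactly the set of maps constant on components, and not, say, a larger family. If it is defined instead as the union of the components of $A^B$ that contain such maps, I would reduce to the same argument. Every element of that union is joined by a fence to some map constant on components, and by the second step any two such maps are joined by a fence, so the union is connected.
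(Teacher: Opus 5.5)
Your primary identification of $\mathcal C(A^B)$ is wrong, not merely unverified. So the isomorphism $\mathcal C(A^B)\cong A^k$ in your first step fails. Your first two paragraphs therefore only show that a subposet of $\mathcal C(A^B)$ is connected, and that says nothing about $\mathcal C(A^B)$ itself.

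The set $\mathcal C(A^B)$ contains every map lying in the same connected component of $A^B$ as a map that is constant on each component of $B$, not just those maps. The paper's own usage shows this:
\begin{itemize}
\item In the proof of Proposition 2.3 it writes $\mathcal C(Q_0^Q)=\{g\in Q^Q\mid g\equiv\langle q_0\rangle\}$.
\item Lemma 2.6, together with $\h(B^A)=|A|\h(B)$, gives $\h(\mathcal C(B^A))=|A|\h(B)$ for connected $A$ and $B$. Under your reading $\mathcal C(B^A)$ would be a copy of $B$, whose height $\h(B)$ is strictly smaller once $|A|\ge 2$ and $\h(B)\ge1$.
\item Concretely, $\mathcal C(\mathbf 2^{\mathbf 2})$ is all of $\mathbf 2^{\mathbf 2}\cong\mathbf 3$, including the identity map.
\end{itemize}
The remark in Section 1 that you are echoing only says that the endpoints of maximum-sized chains are constant on components.

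Your final paragraph, however, is a correct proof for the actual definition, and it should be the main argument. Every element of $\mathcal C(A^B)$ is joined by a fence in $A^B$ to a map constant on each component of $B$. These maps form a copy of $A^k$, which your coordinatewise fence argument shows is connected. Since $\mathcal C(A^B)$ is a union of components of $A^B$, these fences stay inside it, and it is a single component.

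The definition might instead be phrased componentwise: $f\restriction_{B_i}$ is connected in $A^{B_i}$ to a constant map for each $i$. In that case, the restriction isomorphism $A^B\cong\prod_i A^{B_i}$ and the same coordinatewise fences show it is the same set.

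Reorganized this way, your proof is essentially the paper's. The paper just cites \cite[Proposition 1(2), Proposition 1(4), and Lemma 9(1)]{FarBC}: split along the components of $B$, and use that a product of connected ordered sets is connected. The difference is that you apply the product fact to the skeleton $A^k$ of componentwise-constant maps rather than to $\prod_i\mathcal C(A^{B_i})$, which makes your version self-contained.
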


\proof Use \cite[Proposition 1(2), Proposition 1(4), and Lemma 9(1)]{FarBC}. \qed

\begin{proposition} Let $P$ and $Q$ be finite, disconnected ordered sets.  Assume that $\phi:P^P\to Q^Q$ is an order-isomorphism. If $Q$ has a connected component of height $\h(Q)$ that is directly irreducible, then $\h(Q)\le\h(P)$ and $|P|\le|Q|$.
\end{proposition}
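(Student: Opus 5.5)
Throughout, write $P_1,\dots,P_m$ for the connected components of $P$ and $Q_1,\dots,Q_n$ for those of $Q$, and let $Q_1$ be a directly irreducible component with $\h(Q_1)=\h(Q)$. The plan is to use the product decomposition
$$P^P\cong\prod_{i}P^{P_i}$$
and, inside each factor, the decomposition of $P^{P_i}$ into connected components. By \cite[Proposition 1]{FarBC} the components of $P^{P_i}$ are the sets of maps landing in a fixed component $P_j$, i.e.\ they are the $P_j^{P_i}$. The same holds for $Q$. The isomorphism $\phi$ then induces an isomorphism between the component structures, and I would apply Hashimoto's Refinement Theorem to the connected components of $P^P$ and $Q^Q$. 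Each component of $P^P$ is a product $\prod_i P_{j(i)}^{P_i}$ and is sent by $\phi$ to a component $\prod_k Q_{l(k)}^{Q_k}$ of $Q^Q$.

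\emph{Height.} Take a component of $Q^Q$ of maximal height, for instance the one where every $Q_k$ maps into $Q_1$. Its height is $|Q|\h(Q)$ by \cite[Lemma 15(4)]{FarBC}, and it equals $\h(Q^Q)$. Since $\phi$ preserves heights of components, we get $\h(P^P)=\h(Q^Q)$, that is,
$$|P|\h(P)=|Q|\h(Q).$$
Hence it suffices to prove one of the two inequalities, and then the other follows, unless a height is $0$. That degenerate case (antichains) I would treat separately.

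\emph{Main step: $|P|\le|Q|$.} Consider the component $\prod_k Q_1^{Q_k}$. By the Lemma, each $\mathcal C(Q_1^{Q_k})$ is connected, and $Q_1$ is directly irreducible. So by \cite[Theorem 2]{FarBC} (absolute $\mathcal C$-indecomposability), each factor $Q_1^{Q_k}$ should decompose into directly irreducible factors, and we can count them. The idea is to count direct factors isomorphic to powers of $Q_1$. The component contains, as a refinement, exactly $|Q|$ factors ``of type $Q_1$'' in a suitable sense. On the $P$ side the corresponding component is $\prod_i P_{j(i)}^{P_i}$, which refines into at most $|P|$-many pieces of each type. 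Comparing these counts via Hashimoto refinement should give a relation between $|P|$ and $|Q|$. To obtain the direction $|P|\le|Q|$, I would instead go from the $P$ side. The component $\prod_i P_{j}^{P_i}$, with $j$ fixed so that $P_j$ has maximal height, has height $|P|\h(P)$. Refining it should produce at least $|P|$ nontrivial direct factors, because $P_j^{P_i}\cong\prod_{x}$-type decompositions give at least $|P_i|$ factors. The image component in $Q^Q$ has at most $|Q|$ factors as long as each $Q_l^{Q_k}$ contributes at most $|Q_k|$ irreducible factors. This bound should follow from direct irreducibility of $Q_1$ together with \cite[Theorem 2]{FarBC}. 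Then $|P|\le|Q|$, and combining with the height equation gives $\h(Q)\le\h(P)$.

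\emph{Main obstacle.} The hardest step is controlling the number of direct factors: showing that $Q_1^{Q_k}$ splits into at most $|Q_k|$ irreducible factors while $P$-side components split into at least $|P|$ factors. If this count fails, I would fall back on comparing the number of maximal-height components. In $Q^Q$ this number is at least $c^{|Q|}$, where $c$ is the number of components of $Q$ of maximal height. The corresponding count in $P^P$ would then give the inequality directly.
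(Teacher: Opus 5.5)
\textbf{Gap in the main step.} Your count of direct factors rests on the claim that $P_j^{P_i}$ (really $\mathcal C(P_j^{P_i})$) splits into at least $|P_i|$ nontrivial direct factors, with a matching bound of at most $|Q_k|$ on the $Q$ side. This is false. For connected $A$ and $B$, we have $\mathcal C((A_1\times A_2)^B)\cong\mathcal C(A_1^B)\times\mathcal C(A_2^B)$, and $\mathcal C(A^B)$ is directly irreducible whenever $A$ is \cite[Theorem 2(3)]{FarBC}; this is exactly how the paper uses it in Corollary 2.8. For example, ${\bf 2}^{\bf k}$ is a $(k+1)$-element chain for every $k$, so ${\bf 2}^{\bf 2}\cong{\bf 3}$ has one factor, not two.

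So the number of irreducible factors of a component of $P^P$ is governed by the number of components of $P$ and by how the target components factor. It never depends on $|P_i|$. Done correctly, your count gives only this: $\prod_k\mathcal C(Q_1^{Q_k})$ has exactly $n$ irreducible factors and its preimage has at least $m$, so $m\le n$. That is a statement about numbers of components and does not give $|P|\le|Q|$. Nor could an upper bound hold for an arbitrary image component, since the other maximum-height components of $Q$ need not be directly irreducible.

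Your fallback fails for the same reason. Counting maximum-height components gives $k^m=\ell^n$ (Proposition 2.7), with the number of components in the exponent rather than $|Q|$, and it says nothing about cardinalities.

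A smaller error: the components of $P^{P_i}$ are not the sets $P_j^{P_i}$. Those sets can be disconnected, as the isolated points of $C_4^{C_4}$ in Figure 1.2 show, so you must work with $\mathcal C(P_j^{P_i})$.

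\textbf{The missing idea.} The paper proves the height inequality first, using the structure theorem \cite[Theorem 10]{FarBC} for isomorphisms $\mathcal C(B^{Q_0})\cong\mathcal C(A^{P_j})$.
\begin{enumerate}
\item Take $q_0\in Q_0$ of maximum height. Refining the component of $\langle q_0\rangle$ via Hashimoto produces such an isomorphism. Here $A$ is a nontrivial connected direct factor of a maximum-height component of $P$, and $B$ is a factor of $Q_0$.
\item Theorem 10 then gives $E,X,Y,Z$ with $A\cong\mathcal C(E^X)$, $Q_0\cong X\times Z$ and $P_j\cong Y\times Z$.
\item This is where direct irreducibility of $Q_0$ enters: either $|X|=1$ or $|Z|=1$. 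If $|X|=1$, then $Q_0$ is a direct factor of $P_j$. If $|Z|=1$, then $Q_0\cong X$, and ${\bf 2}^X$ (which contains a copy of $X^\partial$) embeds in $A$.
\item In both cases $\h(Q)\le\h(P)$. Only then does your correct identity $|P|\h(P)=|Q|\h(Q)$ give $|P|\le|Q|$.
\end{enumerate}
Your reduction to a single inequality is fine. That inequality, however, has to come from comparing bases and exponents through this decomposition, not from counting direct factors.
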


\proof Say $Q=Q_0+D$, where $D\ne\emptyset$ and $Q_0$ is any component of $Q$ of height $\h(Q)$ that is directly irreducible.

Let $P=P_0+P_1+\cdots+P_m$ where $m\ge1$ and $P_i$ is connected ($0\le i\le m$).

If $P$ is an antichain, then so is $P^P$ and hence $Q^Q$, and hence $Q$, so $|P|=|Q|$ and thus $P\cong Q$.

So assume $\h(P),\h(Q)\ge1$.
For $p\in P$, let $\kappa(p)\in\{0,\dots,m\}$ be such that $p\in P_{\kappa(p)}$.

Pick $q_0\in Q_0$ of maximum height in $Q$.  By \cite[Lemma 15]{FarBC}, there exists $\ell\in\{0,\dots,m\}$ and $p_0,\dots,p_\ell\in P$ such that $p_i$ has height $\h(P)$ for $0\le i\le \ell$, and there exists a partition $\{\Pi_0,\dots,\Pi_\ell\}$ of $\{0,1,\dots,m\}$ into $\ell+1$ parts such that for all $p\in P$ with $\kappa(p)\in\Pi_i$ we have $\phi^{-1}(\langle q_0\rangle)(p)=p_i$ ($0\le i\le\ell$).

We have $\mathcal C(Q_0^Q)=\{g\in Q^Q\mid g\equiv\langle q_0\rangle\}$ and under $\phi^{-1}$ this goes to an ordered set order-isomorphic to $\prod_{i=0}^\ell\prod_{j\in\Pi_i}\mathcal C(P_{\kappa(p_i)}^{P_j})$.

By Hashimoto's Refinement Theorem, for $i\in\{0,1,\dots,\ell\}$ and \cite[Theorem 2(3)]{FarBC}, there exists a direct factor $A_i$ of $P_{\kappa(p_i)}$ such that
$$
\mathcal C(Q_0^{Q_0})\cong\prod_{i=0}^\ell\prod_{j\in\Pi_i}\mathcal C(A_i^{P_j})
$$
\noindent and thus for all $i\in\{0,\dots,\ell\}$, $j\in\Pi_i$ there exists a factor $B_{i,j}$ of $Q_0$ such that $\mathcal C(B_{i,j}^{Q_0})\cong\mathcal C(A_i^{P_j})$.

Fix $i\in\{0,\dots,\ell\}$, $j\in\Pi_i$.  Now, for some $i$, $A_i$ will be non-trivial and connected (because $Q_0$ is non-trivial and $P_{\kappa(p_i)}$ is connected), so we can assume we have such an $i$.

By \cite[Theorem 10]{FarBC}, there exist finite ordered sets $E$, $X$, $Y$, and $Z$ such that
$$
A_i\cong\mathcal C(E^X)\text{,}\ B_{i,j}\cong\mathcal C(E^Y)\text{,}\ Q_0\cong X\times Z\text{,}\ P_j\cong Y\times Z.
$$
Since $A_i$ is non-trivial and connected, then $E$ is non-trivial and connected, and hence contains $\bf2$.

Since $Q_0$ is directly irreducible, then $|X|=1$ or $|Z|=1$.

If $|X|=1$, then $Q_0$ is a direct factor of $P_j$ and hence 
$$
\h(Q)=\h(Q_0)\le\h(P).
$$

If $|Z|=1$, then $A_i$ contains ${\bf 2}^X$.  Since the dual of $X$ can be embedded in ${\bf 2}^X$, then $\h(Q)=\h(Q_0)=\h(X)\le\h(A_i)\le\h(P_{\kappa(p_i)})=\h(P)$.

Since by \cite[Lemma 15(4)]{FarBC}, $|P|\h(P)=\h(P^P)=\h(Q^Q)=|Q|\h(Q)$, we have $|P|\le|Q|$. \qed

\begin{corollary} Let $P$ and $Q$ be finite ordered sets.  Assume $P^P\cong Q^Q$.   If $P$ has a connected component of maximum height that is directly irreducible and so does $Q$, then $\h(Q)=\h(P)$ and $|P|=|Q|$. \qed
\end{corollary}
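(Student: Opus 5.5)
The plan is to split into cases according to connectedness and reduce everything else to Proposition 2.3 applied in both directions. The Proposition only handles the case where both $P$ and $Q$ are disconnected, so the connected cases are dealt with first.

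\emph{Connected case.} Suppose $P$ or $Q$ is connected. By \cite[Lemma 15(2) and Theorem 16]{FarBC}, as recalled in Section 1, $P^P\cong Q^Q$ then gives $P\cong Q$. Hence $\h(Q)=\h(P)$ and $|P|=|Q|$ hold trivially.

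\emph{Disconnected case.} Now assume both $P$ and $Q$ are disconnected, and let $\phi:P^P\to Q^Q$ be an order-isomorphism.
\begin{enumerate}[(i)]
\item By hypothesis $Q$ has a connected component of maximum height, namely of height $\h(Q)$, that is directly irreducible. Proposition 2.3 applied to $\phi$ gives $\h(Q)\le\h(P)$ and $|P|\le|Q|$.
\item By hypothesis $P$ also has such a component, and $\phi^{-1}:Q^Q\to P^P$ is an order-isomorphism. Proposition 2.3 with the roles of $P$ and $Q$ exchanged gives $\h(P)\le\h(Q)$ and $|Q|\le|P|$.
\end{enumerate}
Combining (i) and (ii) yields $\h(P)=\h(Q)$ and $|P|=|Q|$.

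Alternatively, once the height equality is known, the cardinality equality follows directly from the identity $|P|\h(P)=|Q|\h(Q)$ of \cite[Lemma 15(4)]{FarBC}, provided the height is positive. If the height is $0$, both sets are antichains, and the antichain case already handled inside the proof of Proposition 2.3 gives $|P|=|Q|$.

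There is no real obstacle here. The only point needing care is the mixed case where exactly one of $P$, $Q$ is connected, since Proposition 2.3 does not apply there. That case is covered by the cited connectedness result, which needs only one of the two sets to be connected.
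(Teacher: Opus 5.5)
Your proposal is correct and matches the paper's intended argument: the corollary is stated without separate proof because it is Proposition 2.3 applied to $\phi$ and to $\phi^{-1}$. Your explicit treatment of the case where $P$ or $Q$ is connected, where Proposition 2.3's disconnectedness hypothesis is unavailable, via \cite[Lemma 15(2) and Theorem 16]{FarBC}, simply spells out what the paper leaves implicit.
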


\begin{observation} Let $P$ be a finite ordered set.  Let $a$ be the number of elements of height $\h(P)$ in $P$ and let $c$ be the number of connected components of $P$.
Then the number of elements of $P^P$ of maximum height is $a^c$.
\end{observation}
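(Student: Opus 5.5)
We interpret ``elements of maximum height'' as elements $f\in P^P$ whose height (length of the longest chain below $f$) equals $\h(P^P)=|P|\h(P)$. The plan is to show that these are exactly the maps that are constant on each component with value of height $\h(P)$, and then count them.

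\textbf{Sufficiency.} Write $P=P_1+\cdots+P_c$ with each $P_i$ connected. Let $f$ be constant on each component $P_i$, say $f|_{P_i}=\langle a_i\rangle$ with $a_i$ of height $\h(P)$. Choose a chain $b_{i,0}<b_{i,1}<\cdots<b_{i,h}=a_i$ in $P$, where $h=\h(P)$. Starting from the map constant at $b_{1,0}$ on $P_1$, raise its value one element at a time along a linear extension of $P_1$. At each step we change the value at a single point of $P_1$ from $b_{1,k}$ to $b_{1,k+1}$. The map stays order-preserving provided we raise points in reverse order of a linear extension of $P_1$, i.e.\ maximal points first. This takes $|P_1|h$ covering steps within $P_1$. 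Doing this for each component gives a chain of length $|P|h$ ending at $f$. So $f$ has height $|P|h=\h(P^P)$, the maximum possible.

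\textbf{Necessity.} Suppose $f$ has height $|P|h$, and take a maximal chain of length $|P|h$ below $f$. Each cover $g\lessdot g'$ in $P^P$ changes one coordinate by a cover in $P$. Hence a chain of length $|P|h$ forces every coordinate to climb exactly $h$ covers. So for every $x\in P$, $f(x)$ has height $h$ in $P$ and is reached from a minimal element by a chain of length $h$.

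Now let $x<y$ in $P$. Then $f(x)\le f(y)$, and both have height $h$, the maximum height in $P$. Since an element strictly above an element of height $h$ would have height greater than $h$, we get $f(x)=f(y)$. By connectedness, $f$ is constant on each component. Alternatively, one can quote the paper's remark that every maximum-sized chain in $Y^X$ has endpoints constant on components, which gives this directly.

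\textbf{Counting.} The maximum-height elements are therefore exactly the maps that are constant on each of the $c$ components, with value any of the $a$ elements of height $\h(P)$. Every such map is order-preserving, since there are no comparabilities between distinct components. This gives $a^c$ elements.

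The only delicate point is ordering the raising steps so that each intermediate map is order-preserving, and processing elements maximal-first handles this.
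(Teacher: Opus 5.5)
Your proof is correct. It differs from the paper mainly in being self-contained: the paper gives no argument here and simply cites \cite[Lemma 15(3)]{FarBC}, together with the Section 1 remark that maximum-sized chains in $Y^X$ have endpoints constant on components.

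Your sufficiency half is the same ``scale ${\bf 2}^{P_i}$ up each covering pair'' construction that the paper sketches in the proof of Lemma 2.6. Your maximal-points-first ordering is exactly what keeps each intermediate map order-preserving, because the set of raised points is always an up-set.

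Your necessity half uses the single-coordinate description of covers in $P^P$ plus a counting argument. One step there is left implicit and deserves a sentence. Along the chain, the values at a fixed $x$ form a chain in $P$, so $x$ can be raised at most $h$ times. Only with this bound does a total of $|P|h$ cover steps force exactly $h$ steps at every coordinate.

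With that bound stated, your two halves together also reprove $\h(P^P)=|P|\h(P)$: the upper bound comes from necessity and the lower bound from sufficiency. So your argument needs nothing beyond the description of covers in $P^P$. The paper's version is shorter but relies entirely on \cite{FarBC}.
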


\proof See \cite[Lemma 15(3)]{FarBC}. \qed

\begin{lemma} Let $A$ and $B$ be finite connected ordered sets.  Then $\h\big(\mathcal C(B^A)\big)=\h(B^A)$.
\end{lemma}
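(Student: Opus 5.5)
We must show $\h\big(\mathcal C(B^A)\big)=\h(B^A)$ for connected finite $A$ and $B$. Here $\mathcal C(B^A)$ is the connected component of $B^A$ containing the constant maps; for connected $B$ the constant maps all lie in one component, since a chain of comparabilities $b_0\sim b_1\sim\cdots$ in $B$ gives comparable constants $\langle b_0\rangle\sim\langle b_1\rangle\sim\cdots$.

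The inequality $\h\big(\mathcal C(B^A)\big)\le\h(B^A)$ is immediate, because $\mathcal C(B^A)$ is a subset of $B^A$. So the content is the reverse inequality: we must exhibit a chain of length $|A|\h(B)=\h(B^A)$ (using \cite[Lemma 15(4)]{FarBC}) lying inside $\mathcal C(B^A)$.

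\textbf{Approach via the remark in Section 1.} Section 1 notes that every maximum-sized chain in $Y^X$ has as endpoints maps that are constant on all components. Since $A$ is connected, such a chain in $B^A$ has constant maps as endpoints. A chain is connected, so the whole chain lies in the component of its endpoints, namely $\mathcal C(B^A)$. Hence $\mathcal C(B^A)$ contains a chain of length $\h(B^A)$, which gives equality.

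\textbf{Direct construction.} If one prefers not to rely on that remark, the chain can be built explicitly. Take a maximum chain $b_0<b_1<\cdots<b_h$ in $B$, where $h=\h(B)$. Choose a linear extension $a_1,\dots,a_n$ of $A$, listed so that a maximal element is raised first; that is, $a_n,a_{n-1},\dots$ is a reverse linear extension. Starting from $\langle b_0\rangle$, perform $h$ rounds. In round $k$, raise the value at $a_n$, then $a_{n-1}$, down to $a_1$, each from $b_{k-1}$ to $b_k$.

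Every intermediate map is order-preserving. Within round $k$, the raised points form an up-set of $A$ taking value $b_k$, and the rest take value $b_{k-1}<b_k$. Each step strictly increases the map, so the process yields a chain of $nh+1$ elements from $\langle b_0\rangle$ to $\langle b_h\rangle$, i.e. of length $|A|\h(B)$. It contains a constant map and is connected, so it lies in $\mathcal C(B^A)$. Combined with $\h(B^A)=|A|\h(B)$, this proves the lemma.

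\textbf{Main obstacle.} The only subtle point is identifying $\mathcal C(B^A)$ with the component of the constants, and checking that this needs $B$ connected. Connectedness of $B$ ensures all constants lie in one component, as shown above. Connectedness of $A$ is not actually needed for the direct construction, since the chain is built between constants regardless.
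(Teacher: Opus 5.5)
Your proof is correct and essentially the paper's own: the paper takes a covering chain $b_0\lessdot\cdots\lessdot b_{\h(B)}$ in $B$ and stacks a copy of ${\bf 2}^A$ (maps into $\{b_{k-1},b_k\}$) on each covering pair, noting that the resulting chain of length $|A|\h(B)=\h(B^A)$ lies in $\mathcal C(B^A)$, and your round-by-round raising along a reverse linear extension is exactly a maximal chain in each of those copies. Your first route, via the Section~1 remark that maximum-sized chains have endpoints constant on components (hence constant, as $A$ is connected), is also valid.
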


\proof One proof that $\h(B^A)=|A|\h(B)$ takes a chain in $B$ $$
b_0\lessdot b_1\lessdot\cdots\lessdot b_{\h(B)}
$$
\noindent and scales ${\bf2}^A$ up each covering pair.  These maps are all in $\mathcal C(B^A)$. \qed

\begin{proposition} Let $P$ and $Q$ be finite ordered sets and let 
$$
\varphi:P^P\cong Q^Q.
$$
\noindent Let $k,\ell,m,n$ be such that $1\le k\le m$ and $1\le\ell\le n$; let $P_1,\dots,P_m$ be the $m$ connected components of $P$ such that $P_1,\dots,P_k$ are the components of height $\h(P)$; let $Q_1,\dots,Q_n$ be the $n$ connected components of $Q$ such that $Q_1,\dots,Q_\ell$ are the components of height $\h(Q)$.

For $f\in P^P$ and $i\in\{1,\dots,m\}$, let $\kappa(f,i)\in\{1,\dots,m\}$ be such that $f[P_i]\subseteq P_{\kappa(f,i)}$; similarly, define $\lambda(g,j)$ for $g\in Q^Q$ and $j\in\{1,\dots,n\}$.

Then $\varphi$ restricts to an order-isomorphism from
$$
\mathcal A:=\{f\in(P_1\cup\cdots\cup P_k)^P\mid\text{ for }i=1,\dots,m\text{, }f\restriction_{P_i}\in\mathcal C(P_{\kappa(f,i)}^{P_i})\}
$$ 
\noindent onto
$$
\mathcal B:=\{g\in(Q_1\cup\cdots\cup Q_\ell)^Q\mid\text{ for }j=1,\dots,n\text{, }g\restriction_{Q_j}\in\mathcal C(Q_{\lambda(g,j)}^{Q_j})\}.
$$
The ordered set $\mathcal A$ is canonically order-isomorphic to
$$
\sum_{\psi:\{1,\dots,m\}\to\{1,\dots,k\}}\prod_{i=1}^m\mathcal C(P_{\psi(i)}^{P_i}).
$$

Also, $k^m=\ell^n$.
\end{proposition}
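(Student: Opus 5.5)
The idea is to describe $\mathcal A$ intrinsically in terms of the order structure of $P^P$: it should be exactly the union of the connected components of $P^P$ that contain an element of maximum height, namely a constant-on-components map into a top element of a maximum-height component. Since $\varphi$ preserves connected components and heights of elements, it will then carry $\mathcal A$ onto the analogous union $\mathcal B$ in $Q^Q$.

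\textbf{Step 1: connected components of $P^P$.} By \cite[Proposition 1]{FarBC}, the connected components of $P^P$ are indexed by data $(\kappa,\text{components})$. Concretely, the component of $f$ consists of those $g$ with $g[P_i]\subseteq P_{\kappa(f,i)}$ and $g\restriction_{P_i}\equiv f\restriction_{P_i}$ in $P_{\kappa(f,i)}^{P_i}$ for every $i$. Such a component is canonically the product over $i$ of the corresponding components of $P_{\kappa(f,i)}^{P_i}$.

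\textbf{Step 2: the components meeting maximum height.} By Observation and \cite[Lemma 15]{FarBC}, an element of $P^P$ of height $\h(P^P)=|P|\h(P)$ is a map sending each $P_i$ to a single element of height $\h(P)$. Such an element lies in some $P_{\psi(i)}$ with $\psi(i)\le k$. A constant map $P_i\to P_{\psi(i)}$ lies in $\mathcal C(P_{\psi(i)}^{P_i})$, the component of constant maps. Hence the components of $P^P$ containing an element of maximum height are exactly the products $\prod_i\mathcal C(P_{\psi(i)}^{P_i})$ for $\psi:\{1,\dots,m\}\to\{1,\dots,k\}$. Their union is $\mathcal A$, and distinct $\psi$ give disjoint components. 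This yields the canonical isomorphism of $\mathcal A$ with the disjoint sum $\sum_\psi\prod_i\mathcal C(P_{\psi(i)}^{P_i})$.

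\textbf{Step 3: transport and counting.} An order-isomorphism maps connected components onto connected components and preserves the height of elements (the length of a longest chain below). Therefore $\varphi$ maps the union of components containing an element of maximum height onto the corresponding union in $Q^Q$, i.e.\ $\varphi[\mathcal A]=\mathcal B$. Counting components on each side gives $k^m=\ell^n$: the number of functions $\psi$ equals the number of components of $\mathcal A$, which equals the number of components of $\mathcal B$, namely $\ell^n$.

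\textbf{Main obstacle.} The delicate point is the maximum-height step. A given $f$ in such a component need not itself have maximum height, so I should not characterize $\mathcal A$ elementwise by height. Instead I rely on the fact that each component is a product of connected factors, hence connected, and that each such component contains a maximum-height element (the constant maps to top elements). I also need to confirm that the maps of maximum height in $P^P$ are exactly those, citing \cite[Lemma 15(3)]{FarBC}.
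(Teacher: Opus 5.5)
Your proposal is correct and follows the paper's own argument. The paper likewise identifies $\mathcal A$ as the union of the connected components of $P^P$ that contain an element of height $\h(P^P)$, so that $\varphi[\mathcal A]=\mathcal B$; it then gets $k^m=\ell^n$ by observing that each summand $\prod_i\mathcal C(P_{\psi(i)}^{P_i})$ is connected and counting components. You supply details the paper leaves implicit, namely the description of the maximum-height elements via \cite[Lemma 15]{FarBC} and the product decomposition of the components of $P^P$.
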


\proof The ordered set $\mathcal A$ equals
$$
\{f\in P^P\mid f\equiv e\in P^P\text{ where }\h(e)=\h(P^P)\}.
$$ 

For the last statement, use \cite[Lemma 9(1) and Theorem 2(3)]{FarBC} to see that each summand is connected. \qed

\begin{corollary} Let all be as in Proposition 2.7.  Assume $P_1$ and $Q_1$ are directly irreducible.  Then $m=n$ and the summand corresponding to the function $\psi:\{1,\dots,m\}\to\{1,\dots,k\}$ such that $\psi(r)=1$ for all $r\in\{1,\dots,m\}$ is such that, via Hashimoto's Strong Refinement Theorem, $\mathcal C(P_1^{P_1})\cong\mathcal C(Q_t^{Q_j})$ for some $j,t\in\mathbb N$ where $1\le t\le\ell$ and $1\le j\le n$ and $P_1\cong Q_j\cong Q_t$.
\end{corollary}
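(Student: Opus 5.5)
Since $P_1$ and $Q_1$ are directly irreducible components of maximum height, Corollary 2.4 gives $\h(P)=\h(Q)$ and $|P|=|Q|$. The plan is to use the decomposition of Proposition 2.7 and then match summands under $\varphi$.

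First, I will show that $m=n$ by counting direct factors of the summands. By Proposition 2.7, $\varphi$ restricts to an isomorphism $\mathcal A\cong\mathcal B$. Each summand $\prod_{i=1}^m\mathcal C(P_{\psi(i)}^{P_i})$ is connected, so $\varphi$ carries summands bijectively onto summands. The summand for the constant map $\psi\equiv1$ is $S:=\prod_{i=1}^m\mathcal C(P_1^{P_i})$. By \cite[Theorem 2(3)]{FarBC} each factor $\mathcal C(P_1^{P_i})$ is non-trivial, because $P_1$ is connected and non-trivial (we may assume $\h(P)\ge1$, the antichain case being trivial). Its image is some $\prod_{j=1}^n\mathcal C(Q_{\lambda(j)}^{Q_j})$ for a map $\lambda:\{1,\dots,n\}\to\{1,\dots,\ell\}$.

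To compare $m$ and $n$, I will use the factor structure \cite[Theorem 2 and Proposition 1]{FarBC}. When the base $B$ is connected and directly irreducible, $\mathcal C(B^A)$ for connected $A$ should be directly irreducible, or at least should have a controlled number of irreducible factors. Hashimoto's Strong Refinement Theorem, applied to the connected ordered set $S$, then compares the two factorizations. To get $m=n$ itself, I would run the symmetric argument on the summand $\psi'\equiv1$ for $Q$, giving $n\le m$ and $m\le n$. An alternative is to combine the counting identity $k^m=\ell^n$ with $|P|=|Q|$, together with the fact that each $P_i$ embeds into a factor.

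Next, I will pick out a factor isomorphic to $\mathcal C(P_1^{P_1})$. Strong refinement gives some $j$ and $t=\lambda(j)$ such that $\mathcal C(P_1^{P_1})$ and $\mathcal C(Q_t^{Q_j})$ share a common irreducible factor; with irreducibility they are isomorphic. Then \cite[Theorem 10]{FarBC}, as used in the proof of Proposition 2.3, produces $E,X,Y,Z$ with
$$P_1\cong\mathcal C(E^X),\quad Q_t\cong\mathcal C(E^Y),\quad P_1\cong X\times Z,\quad Q_j\cong Y\times Z.$$
Since $P_1$ is directly irreducible, either $|X|=1$ or $|Z|=1$. If $|X|=1$, then $P_1\cong Z$ and $E$ is trivial unless $\mathcal C(E^{\bf 1})\cong E\cong P_1$. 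Height and cardinality comparisons, using $\h(Q_t)=\h(Q)=\h(P)=\h(P_1)$ and $\h(\mathcal C(E^Y))=|Y|\h(E)$ from Lemma 2.6, then force $|Y|=1$, hence $Q_t\cong P_1$ and $Q_j\cong Z\cong P_1$. If $|Z|=1$, the same height count $\h(P_1)=|X|\h(E)=|Y|\h(E)$ gives $|X|=|Y|$, and I aim to conclude $X\cong Y$ and so obtain the isomorphisms.

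\textbf{Main obstacle.} The main obstacle is the $|Z|=1$ case, together with justifying direct irreducibility of $\mathcal C(P_1^{P_1})$, which the strong refinement step needs. There I would use that $P_1\cong X\cong\mathcal C(E^X)$ is directly irreducible: then $|X|=1$ or $E$ is small, since $\mathcal C(E^X)$ contains ${\bf 2}^X$ and so has height at least $\h(X)$. This forces the degenerate situation, after which $Q_j\cong Y$ and $Q_t\cong\mathcal C(E^Y)$ can be identified with $P_1$.
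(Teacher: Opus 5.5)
Your overall plan is the paper's. You count irreducible factors in the summand for $\psi\equiv1$, use symmetry to get $m=n$, match $\mathcal C(P_1^{P_1})$ with some $\mathcal C(Q_t^{Q_j})$, then apply \cite[Theorem 4]{FarBC} and compare heights. The genuine gap is in the last step, where you misquote the decomposition. For $\mathcal C(P_1^{P_1})\cong\mathcal C(Q_t^{Q_j})$ the theorem gives $P_1\cong\mathcal C(E^X)$, $Q_t\cong\mathcal C(E^Y)$, $P_1\cong Y\times Z$, $Q_j\cong X\times Z$. The exponent on each side shares a factor with the base on the \emph{other} side, so both sides are $\mathcal C(E^{X\times Y\times Z})$. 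Your version ($P_1\cong X\times Z$, $Q_j\cong Y\times Z$) would make the two sides $\mathcal C(E^{X\times X\times Z})$ and $\mathcal C(E^{Y\times Y\times Z})$, which the theorem does not assert. So your case split is on the wrong factorization, and the case $|Z|=1$ is left open (``I aim to conclude $X\cong Y$''). With the correct decomposition, direct irreducibility of $P_1\cong Y\times Z$ gives $|Y|=1$ or $|Z|=1$.
\begin{itemize}
\item If $|Z|=1$, then $Q_t\cong\mathcal C(E^{P_1})$ with $E$ connected and non-trivial (because $|P_1|\ge2$). So ${\bf 2}^{P_1}$ embeds in $Q_t$, and $\h(Q)=\h(Q_t)\ge|P_1|>\h(P_1)=\h(P)$, contradicting Corollary 2.4. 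The bound you need is $\h({\bf 2}^{P_1})=|P_1|$, not the weaker ``height at least $\h(X)$'' in your last paragraph.
\item If $|Y|=1$, then $Q_j\cong X\times P_1$, so $\h(X)+\h(P_1)\le\h(Q)=\h(P_1)$. This forces the connected $X$ to be a singleton, and then $Q_j\cong P_1\cong E\cong Q_t$.
\end{itemize}

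Second, you hedge on the direct irreducibility of each $\mathcal C(P_1^{P_r})$ and list it as an obstacle. It is exactly \cite[Theorem 2(3)]{FarBC}, and it must be in hand before the refinement step. It cannot come out of the $E,X,Y,Z$ analysis, which presupposes the isomorphism it is used to produce. It is what makes the $\psi\equiv1$ summand a product of exactly $m$ irreducibles. Its image has $n$ non-trivial factors $\mathcal C(Q_{\lambda(j)}^{Q_j})$ (non-trivial since each $Q_{\lambda(j)}$ is connected of height $\h(Q)\ge1$), whence $n\le m$. Once $m=n$, the same count makes each $\mathcal C(Q_{\lambda(j)}^{Q_j})$ irreducible, so that sharing an irreducible factor with $\mathcal C(P_1^{P_1})$ upgrades to an isomorphism; you should say so explicitly. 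Your alternative via $k^m=\ell^n$ and $|P|=|Q|$ does not determine $m$ (take $k=\ell=1$).
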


\proof By Corollary 2.4, $\h(P)=\h(Q)$.

Recall \cite[Lemma 9(1) and Theorem 2(3)]{FarBC}.  Connected components in the sum of Proposition 2.7 for $P$ must go to connected components in the corresponding sum for $Q$.  By \cite[Theorem 2(3)]{FarBC}, each factor $\mathcal C(P_1^{P_r})$ where $r\in\{1,\dots,m\}$ is directly irreducible.  That component corresponds to a product $\prod_{j=1}^n\mathcal C(Q_{\chi(j)}^{Q_j})$ for some $\chi:\{1,\dots,n\}\to\{1,\dots,\ell\}$, where each factor is non-trivial since $\h(Q)\ge1$ (because $|P_1|,|Q_1|\ge2$ and $P_1$ and $Q_1$ are connected).

Thus $n\le m$.  By symmetry, $m\le n$, so $m=n$.  Not only that, $Q_{\chi(j)}$ and $\mathcal C(Q_{\chi(j)}^{Q_j})$ are directly indecomposable for all $j\in\{1,\dots,n\}$.  This means that by Hashimoto's Refinement Theorem, $\mathcal C(P_1^{P_1})\cong\mathcal C(Q_t^{Q_j})$ for some $j,t\in\mathbb N$ with $1\le j\le n$ and $1\le t\le \ell$.  By \cite[Theorem 4]{FarBC}, there exist finite, connected ordered sets $E$, $X$, $Y$, and $Z$ such that
$$
P_1\cong\mathcal C(E^X)\text{, }Q_t\cong\mathcal C(E^Y)\text{, }P_1\cong Y\times Z\text{, }Q_j\cong X\times Z.
$$
\noindent If $|Z|=1$, then ${\bf2}^{P_1}$ embeds in $Q_t$ (since $E$ is connected but $|E|\ge2$, since $|P_1|\ge2$).  But then $\h(P)=\h(Q)=\h(Q_t)>\h(P_1)=\h(P)$, a contradiction.

Thus $|Y|=1$, so $Q_j\cong X\times P_1$, and $\h(P_1)\ge\h(Q_j)=\h(X)+\h(P_1)$ and hence $\h(X)=0$, making $X$ an antichain.  As $X$ is connected, $|X|=1$.  Thus $Q_j\cong P_1\cong Q_t$. \qed

\begin{theorem} Let $P$ and $Q$ be finite ordered sets such that $P^P\cong Q^Q$.  Assume that $P$ has a connected component of height $\h(P)$ that is directly irreducible and $Q$ has a connected component of height $\h(Q)$ that is directly irreducible.

Then $P$ and $Q$ have the same height and the same number of connected components of maximum height, the same number of connected components, the same number of elements, and the same number of elements of height $\h(P)$.

There is a bijection between the set of directly-irreducible connected components of $P$ of height $\h(P)$ and the set of those of $Q$ that pairs order-isomorphic components.

Further, every component of $P$ of maximum height that is a product of exactly two directly-irreducible ordered sets is order-isomorphic to a connected component of $Q$, and vice versa.
\end{theorem}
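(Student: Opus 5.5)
Without loss of generality, order the components as in Proposition 2.7 so that $P_1$ and $Q_1$ are directly irreducible of maximum height. The numerical claims then follow from results already in hand.

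\textbf{Step 1: the numerical invariants.} Corollary 2.4 gives $\h(P)=\h(Q)$ and $|P|=|Q|$. Corollary 2.8 gives $m=n$. Proposition 2.7 gives $k^m=\ell^n$; with $m=n\ge 1$ this forces $k=\ell$. Let $a$ and $b$ be the numbers of elements of height $\h(P)$ in $P$ and $Q$. Observation 2.5, together with the fact that $\varphi$ preserves height, gives $a^m=b^n$, and hence $a=b$.

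\textbf{Step 2: the bijection of directly-irreducible top components.} Consider the decomposition of $\mathcal A$ and $\mathcal B$ in Proposition 2.7 into connected summands. The order-isomorphism induced by $\varphi$ must carry each summand of $\mathcal A$ onto a summand of $\mathcal B$. Fix a directly irreducible component $P_s$ of maximum height, and take the constant index map $\psi\equiv s$. The summand $\prod_{r}\mathcal C(P_s^{P_r})$ has exactly $m$ factors, and each factor is directly irreducible by \cite[Theorem 2(3)]{FarBC}.

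Repeat the argument of Corollary 2.8 with $P_s$ in place of $P_1$. The corresponding summand of $\mathcal B$ is $\prod_j\mathcal C(Q_{\chi(j)}^{Q_j})$, which also has $n=m$ nontrivial factors. By Hashimoto's refinement, this forces each of its factors to be irreducible and yields $\mathcal C(P_s^{P_s})\cong\mathcal C(Q_t^{Q_j})$. The argument via \cite[Theorem 4]{FarBC} (the case $|Z|=1$ gives a height contradiction) then yields $P_s\cong Q_j\cong Q_t$, with $Q_t$ of maximum height. Since each $\mathcal C(Q_{\chi(j)}^{Q_j})$ is directly irreducible, $Q_t$ is directly irreducible. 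So each isomorphism type of irreducible top component of $P$ occurs among those of $Q$, and by symmetry conversely.

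To obtain a bijection that respects multiplicities, count. The number of summands of $\mathcal A$ indexed by constant maps $\psi$ whose summand is isomorphic to $\prod_r\mathcal C(P_s^{P_r})$ is governed by the number of top components isomorphic to $P_s$. Compare this count with the matching count on the $Q$ side; I expect to do it by matching the isomorphism classes of the connected summands of $\mathcal A$ and $\mathcal B$ of this ``constant'' type. This gives equal multiplicities for each isomorphism type, and hence the bijection.

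\textbf{Step 3: products of exactly two irreducibles.} Let $P_s\cong U\times V$ with $U,V$ directly irreducible, and let $\psi\equiv s$. The summand $\prod_r\mathcal C(P_s^{P_r})$ decomposes (by \cite[Theorem 2]{FarBC}) into $\mathcal C(U^{P_r})\times\mathcal C(V^{P_r})$ for each $r$, that is, $2m$ irreducible factors. The matching $Q$-summand $\prod_j\mathcal C(Q_{\chi(j)}^{Q_j})$ has $m$ factors, so on average each splits into two. Refinement then gives, for the factor $r=s$, a $j$ and factors with $\mathcal C(U^{P_s})\times\mathcal C(V^{P_s})\cong\mathcal C(Q_t^{Q_j})$, or a piece of it. Applying \cite[Theorem 4/10]{FarBC} to each irreducible piece, and using height equality to rule out the exponential-blowup cases as in Corollary 2.8, should yield $Q_j\cong P_s$.

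\textbf{Main obstacle.} The hard part is the bookkeeping in Step 3: showing that no mixed distribution occurs, such as one $Q$-factor absorbing three irreducibles while another absorbs one. I would first try a counting argument across all constant-$\psi$ summands simultaneously, using the total number of irreducible factors on each side and the minimality forced by height equality.
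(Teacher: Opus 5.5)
Your Step 1 matches the paper. Steps 2 and 3 each have a genuine gap.

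\textbf{Step 2.} You use only the single factor $\mathcal C(P_s^{P_s})$ of the summand for $\psi\equiv s$. That shows every isomorphism type of directly irreducible top component of $P$ occurs in $Q$, but it says nothing about multiplicities. The counting you sketch is not justified. The isomorphism type of a summand does not determine $\psi$: for instance $\mathcal C(({\bf 2}^X)^Y)\cong{\bf 2}^{X\times Y}\cong\mathcal C(({\bf 2}^Y)^X)$, while ${\bf 2}^X\not\cong{\bf 2}^Y$ when $X\not\cong Y$. Moreover, $\varphi$ need not send constant-type summands to constant-type summands (the paper's $({\bf 2}+{\bf 2})^{{\bf 2}+{\bf 2}}$ remark). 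So the number of summands isomorphic to $\prod_r\mathcal C(P_s^{P_r})$ is not visibly a function of the multiplicity of $P_s$.

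The missing idea is to use all factors of one summand simultaneously. The paper takes $\psi(i)=i$ on the directly irreducible top components $P_1,\dots,P_\alpha$ and $\psi(i)=1$ otherwise. This summand has $m$ irreducible factors, so Hashimoto's theorem matches them bijectively with the $n=m$ factors $\mathcal C(Q_{\chi(j)}^{Q_j})$. Hence $\mathcal C(P_i^{P_i})$ goes to a factor with exponent $Q_{j_i}$, and the indices $j_i$ are pairwise distinct. Then \cite[Theorem 4]{FarBC} and the height argument give $Q_{j_i}\cong P_i$: the case $|Z|=1$ would give $\h(Q_{\chi(j_i)})\ge|P_i|>\h(P)$, and in the case $|Y|=1$ the heights force $|X|=1$. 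This yields an injection preserving isomorphism type, and symmetry makes it a bijection. Your constant $\psi\equiv s$ would also work, provided you run this argument on every factor $\mathcal C(P_s^{P_r})$ with $P_r$ irreducible of top height, not just on $r=s$.

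\textbf{Step 3.} This step fails for the reason you anticipate, and the fix is again the choice of $\psi$. With $\psi\equiv s$ you have $2m$ irreducible factors against $m$ factors on the $Q$-side, so uneven distributions cannot be excluded. Also, the exponent $P_s\cong U\times V$ of the factor you focus on is reducible, so the dichotomy $|Y|=1$ or $|Z|=1$ that drives the height argument is unavailable. And nothing forces $\mathcal C(U^{P_s})$ and $\mathcal C(V^{P_s})$ into the same $Q$-factor.

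The paper instead sets $\psi(1)=s$ and $\psi(i)=1$ for $i\ne1$, where $P_1$ is irreducible of top height. The summand then has exactly $m+1$ irreducible factors, namely $\mathcal C(U^{P_1})$, $\mathcal C(V^{P_1})$, and $\mathcal C(P_1^{P_i})$ for $i\ne1$. So exactly one $Q$-factor $\mathcal C(Q_{\chi(j_0)}^{Q_{j_0}})$ splits, with $Q_{\chi(j_0)}\cong T_1\times T_2$, and all the others are irreducible. Because the exponent is now the irreducible $P_1$, the Theorem 4 argument applies. If $\mathcal C(U^{P_1})$ matched some $\mathcal C(Q_{\chi(j)}^{Q_j})$ with $j\ne j_0$, it would force $U\cong Q_{\chi(j)}$, which is impossible since $\h(U)<\h(P)=\h(Q)$. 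So it matches, say, $\mathcal C(T_1^{Q_{j_0}})$, and the same argument gives $U\cong T_1$. Likewise $V\cong T_2$, hence $P_s\cong Q_{\chi(j_0)}$.
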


\proof We use the notation of Proposition 2.7.
The first part is Corollary 2.4 and Corollary 2.8 and Observation 2.5: Let $\alpha,\beta\in\mathbb N$ be such that $1\le\alpha\le k$ and $1\le\beta\le \ell$ and $P_1,\dots,P_\alpha,Q_1,\dots,Q_\beta$ are directly irreducible.  By Proposition 2.7 and Corollary 2.8, $k=\ell$ (taking $n$th roots since $m=n$), implying that $P$ and $Q$ have the same number of elements of height $\h(P)$.

Let $\psi:\{1,\dots,m\}\to\{1,\dots,k\}$ be such that $\psi(i)=i$ for $1\le i\le\alpha$ and $\psi(i)=1$ for $\alpha<i\le m$.  Using Proposition 2.7 and \cite[Lemma 9(1)]{FarBC}, this summand of $\mathcal A$ has $m$ directly-irreducible factors, so in the corresponding summand of $\mathcal B$, $\prod_{j=1}^n\mathcal C(Q_{\chi(j)}^{Q_j})$ for some $\chi:\{1,\dots,n\}\to\{1,\dots,\ell\}$, each $Q_{\chi(j)}$ is directly irreducible.

Each $\mathcal C(P_{\psi(i)}^{P_i})$ for $1\le i\le\alpha$ corresponds to a different $j_i\in\{1,\dots,n\}$.  Fix $i\in\{1,\dots,\alpha\}$.  By \cite[Theorem 4]{FarBC}, there exist finite connected ordered sets $E,X,Y,Z$ such that
$$
P_i\cong\mathcal C(E^X)\text{, }Q_{\chi(j_i)}\cong\mathcal C(E^Y)\text{, }P_i\cong Y\times Z\text{, }Q_{j_i}\cong X\times Z.
$$
\noindent Since $P_i$ is non-trivial, so is $E$.  Since $E$ is connected and non-trivial, $\bf2$ embeds in $E$.  If $|Z|=1$, then $\h(Q)=\h(Q_{\chi(j)})\ge\h({\bf2}^{P_i})>\h(P_i)=h(P)$, a contradiction.

Thus $|Y|=1$ so $\h(Q)\ge\h(Q_{j_i})=\h(X)+\h(P_i)=\h(X)+\h(P)=\h(X)+\h(Q)$.  Hence $\h(X)=0$.  Since $X$ is connected, $|X|=1$.

Thus $P_i\cong Q_{\chi(j_i)}\cong Q_{j_i}$.
Use symmetry.

Now assume $\alpha<k$ and $P_{\alpha+1}\cong V\times W$ where $V$ and $W$ are directly irreducible.  Let $\psi:\{1,\dots,m\}\to\{1,\dots,k\}$ be such that
$$
\psi(i)=
\begin{cases}
\alpha+1\ \text{if}\ i=1\\
1\ \text{if}\ i\ne1.
\end{cases}
$$
\noindent Then, as above, the corresponding summand of $\mathcal A$ has $m+1$ directly-irreducible factors.  The summand of $\mathcal B$ it corresponds to must be such that $Q_{\chi(j)}$ is directly irreducible for all $j\in\{1,\dots,\hat{j_0},\dots,n\}$ and $Q_{\chi(j_0)}\cong T\times U$, where $T$ and $U$ are directly-irreducible ordered sets.

If $\mathcal C(V^{P_1})$ corresponds under Hashimoto's Refinement Theorem to $\mathcal C(Q_{\chi(j)}^{Q_j})$ where $j\in\{1,\dots,n\}-\{j_0\}$, then by \cite[Theorem 4]{FarBC} there exist finite connected ordered sets $E,X,Y,Z$ such that
$$
V\cong\mathcal C(E^X)\text{, }Q_{\chi(j)}\cong\mathcal C(E^Y)\text{, }P_1\cong Y\times Z\text{, }Q_{j}\cong X\times Z.
$$
\noindent If $|Z|=1$, then $\h(Q)=\h(Q_{\chi(j)})\ge\h({\bf2}^{P_1})>\h(P_1)=h(P)$, a contradiction.  Thus $|Y|=1$ so
$$
\h(Q)\ge\h(Q_{j})=\h(X)+\h(P_1)=\h(X)+\h(P)=\h(X)+\h(Q).
$$
\noindent  Hence $\h(X)=0$.  Since $X$ is connected, $|X|=1$, so $V\cong Q_{\chi(j)}$.  But then $\h(P)=\h(P_{\alpha+1})>\h(V)=\h(Q_{\chi(j)})=\h(Q)$, a contradiction.

Hence without loss of generality $\mathcal C(V^{P_1})\cong\mathcal C(T^{Q_{j_0}})$.  Again, we have finite connected ordered sets $E,X,Y,Z$ such that
$$
V\cong\mathcal C(E^X)\text{, }T\cong\mathcal C(E^Y)\text{, }P_1\cong Y\times Z\text{, }Q_{j_0}\cong X\times Z.
$$
\noindent If $|Z|=1$, then $\h(Q)=\h(\chi(j_0))>\h(T)\ge\h({\bf2}^{P_1})>\h(P_1)=\h(P)$, a contradiction. Thus $|Y|=1$ and hence $|X|=1$ so $V\cong T$.

Similarly, $W\cong U$.  Thus $P_{\alpha+1}\cong Q_{\chi(j_0)}$. \qed

\begin{lemma} A finite, connected ordered set $P$ of height at most $1$ is either $\bf1$ or directly irreducible.
\end{lemma}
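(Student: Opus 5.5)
The plan is to use the height additivity $\h(X\times Y)=\h(X)+\h(Y)$ together with the fact that a nontrivial finite connected ordered set has positive height. Suppose $P$ is finite, connected, of height at most $1$, and $|P|\ge2$. Suppose further that $P\cong X\times Y$ with $|X|,|Y|\ge2$. We aim for a contradiction.

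First, both factors are connected. If $X$ were disconnected, say $X=X_1+X_2$, then $X\times Y=(X_1\times Y)+(X_2\times Y)$ would be disconnected, contradicting the connectedness of $P$. The same argument applies to $Y$. A connected finite ordered set with at least two elements is not an antichain, so it contains a strict comparability. Hence $\h(X)\ge1$ and $\h(Y)\ge1$. Then $\h(P)=\h(X)+\h(Y)\ge2>1$, which is the desired contradiction. Therefore $P$ is directly irreducible whenever $|P|\ge2$.

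If $|P|=1$ then $P\cong\bf1$. Since $P$ is non-empty by convention, these two cases exhaust the possibilities.

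The only step needing care is that the factors of a connected product are connected. The elementary decomposition argument above handles this, so there is no real obstacle. One should also note that the height formula cited in Section 1 from \cite[Chapter I, Section 9, Exercise 4(a)]{BirDH} applies here, since all the ordered sets involved are finite.
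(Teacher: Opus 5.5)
Your proof is correct and follows essentially the paper's argument: the factors of a connected product are connected, a connected ordered set with at least two elements has positive height, and $\h(X\times Y)=\h(X)+\h(Y)$ then rules out a nontrivial factorization at height at most $1$. The only differences are presentational: you argue by contradiction assuming both factors are nontrivial, and you justify the connectedness of the factors explicitly, which the paper simply asserts.
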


\proof If $\h(P)=0$, then $P$ is a connected antichain, and hence $|P|=1$.

So suppose $\h(P)=1$.  If $P\cong A\times B$ ($A,B$ ordered sets), then $A$ and $B$ are finite and connected but $1=\h(P)=\h(A)+\h(B)$, so $\h(A)=0$ or $\h(B)=0$.  Hence $A$ or $B$ is a connected antichain, so $A\cong\bf1$ or $B\cong\bf1$. \qed

\begin{lemma} Let $P$ and $Q$ be finite ordered sets and let $\phi:P^P\to Q^Q$ be an order-isomorphism.  Then $\h(P)=\h(Q)$ if $\h(P)\in\{0,1\}$.  If $\h(P)=2$, then either $\h(Q)=2$, or else $\h(Q)=3$ and every component $S$ of $Q$ of height $\h(Q)$ has the form ${\bf2}\times R_S$ where $R_S$ is a component of $P$ of height $2$.
\end{lemma}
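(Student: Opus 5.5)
The plan is to split on whether $P$ and $Q$ are connected. Then I would use the height identity $|P|\h(P)=|Q|\h(Q)$ from \cite[Lemma 15(4)]{FarBC} together with Proposition 2.2 and its proof.

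\emph{Connected case and height $0$.} If $P$ or $Q$ is connected, \cite[Lemma 15(2) and Theorem 16]{FarBC} gives $P\cong Q$, so $\h(P)=\h(Q)$. If $\h(P)=0$, then $P$ is an antichain, so $P^P$ and hence $Q^Q$ are antichains. Then $\h(Q^Q)=|Q|\h(Q)=0$ forces $\h(Q)=0$.

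\emph{Height $1$.} Assume both $P$ and $Q$ are disconnected and $\h(P)=1$, so $\h(Q)\ge1$. Every component of $P$ of height $1$ is directly irreducible by Lemma 2.10. If some component of $Q$ of height $\h(Q)$ is directly irreducible, Proposition 2.2 gives $\h(Q)\le\h(P)=1$. Otherwise let $S$ be a component of $Q$ of maximum height, and run the argument of Proposition 2.2 with the roles reversed. This means picking $q_0\in S$ of maximum height, pulling $\mathcal C(S^Q)$ back through $\phi$, and applying Hashimoto refinement and \cite[Theorem 10]{FarBC}. For each directly irreducible factor $S'$ of $S$ we get $E,X,Y,Z$ with $S'\cong\mathcal C(E^Y)$, some $P_j\cong Y\times Z$, and a factor $A_i\cong\mathcal C(E^X)$ of a height-$1$ component $P_{\kappa(p_i)}$ of $P$.
\begin{itemize}
\item Since $P_{\kappa(p_i)}$ is directly irreducible, $A_i$ is either $\bf1$ or that component. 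It has height $\le1$.
\item If $E$ is nontrivial, then $\h(\mathcal C(E^X))\ge|X|$, so $|X|=1$ and $E\cong A_i$ has height $\le 1$.
\item Then $\h(S')=\h(\mathcal C(E^Y))\le|Y|$, and $Y$ is a factor of $P_j$, whose height is at most $1$.
\end{itemize}
Turning these into a bound on $\h(S')$ is the \textbf{main obstacle}. I would try to show that the trivial option for $E$ cannot occur for a nontrivial factor, since a trivial $E$ makes $\mathcal C(E^Y)\cong\bf1$. That should force each factor $S'$ to satisfy $\h(S')\le\h(Y)\le 1$. Summing over factors bounds $\h(S)$ by the number of factors, and the exact identity $|P|=|Q|\h(Q)$ then limits the options. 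A cleaner route is a counting argument. By Observation 2.5, the elements of $P^P$ of maximum height number $a^c$, and $P^P$ has $|P|$ times a height-$1$ chain length. Counting rank levels of $\mathcal C(S^{S})$ against those of $\mathcal C(P_i^{P_j})$, which have height $|P_j|$ with coverings of ``height-$1$ type'', should rule out $\h(Q)\ge2$.

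\emph{Height $2$.} The same scheme works: components of $P$ of height $2$ are either directly irreducible or ${\bf2}\times{\bf2}$-like products of two height-$1$ connected sets. Suppose $Q$ has a directly irreducible component of maximum height. Then Proposition 2.2 gives $\h(Q)\le2$, and symmetry together with $|P|\h(P)=|Q|\h(Q)$ gives equality, unless $P$ has no such component; that case is handled by the analysis below.

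Otherwise every top component $S$ of $Q$ is decomposable. Matching factors through the Hashimoto/Theorem 10 analysis as above, each factor of $S$ is isomorphic to a component $R_S$ of $P$ or is a small factor such as $\bf2$. The height bound $\h(S)\le\h(R_S)+1$ then comes from the ``$|Z|=1$ forces ${\bf2}^X$ to embed'' argument, giving $\h(Q)\le3$. Finally $\h(Q)=3$ forces $S\cong{\bf2}\times R_S$ with $\h(R_S)=2$, which is exactly the exceptional case in the statement.
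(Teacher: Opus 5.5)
There is a genuine gap, and it sits exactly at the step you label the main obstacle. When you apply \cite[Theorem 10]{FarBC} to $\mathcal C(S'^{S})\cong\mathcal C(A_i^{P_j})$, you keep $S'\cong\mathcal C(E^Y)$, $A_i\cong\mathcal C(E^X)$ and $P_j\cong Y\times Z$. You drop the fourth conclusion, that the exponent splits as $S\cong X\times Z$, and that relation is what closes the argument in the paper. In the height-$1$ case you have already correctly obtained $|X|=1$ from $\h(A_i)=|X|\,\h(E)$ (Lemma 2.6 and \cite[Lemma 15(4)]{FarBC}); $E$ is automatically nontrivial because $S'\cong\mathcal C(E^Y)$ is. Then $S\cong Z$ is a direct factor of $P_j\cong Y\times Z$, so $\h(Q)=\h(S)\le\h(P_j)\le 1$, and $\h(Q)=0$ is excluded as you noted. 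Your proposed substitute, $\h(S')\le\h(Y)$, is not merely unproved but false in the configuration that actually occurs. Since $S'$ is a nontrivial factor of $S\cong Z$, we have $\h(Z)\ge1$, and $\h(Y)+\h(Z)\le1$ then forces $|Y|=1$, whereas $\h(S')=|Y|\,\h(E)=1$. The fallbacks do not rescue it. Bounding $\h(S)$ by its number of irreducible factors gives nothing useful. The identity $|P|=|Q|\,\h(Q)$ is consistent with $\h(Q)=2$ when $|P|=2|Q|$. The rank-counting idea is only asserted.

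The height-$2$ part has the same omission and is otherwise only a sketch. The claim that each factor of $S$ is a component of $P$ or ``a small factor such as ${\bf 2}$'' is never derived. The bound $\h(S)\le\h(R_S)+1$ also does not come from the $|Z|=1$ case; there $S\cong X$ and you only get $\h(S)\le\h(A_i)\le 2$. The paper's route is as follows.
\begin{enumerate}[(i)]
\item Take $A_i$ a nontrivial connected factor of a height-$2$ component. If $\h(A_i)=1$, then $|X|=1$ and $\h(S)\le 2$ as above.
\item If $\h(A_i)=2$, then $A_i\cong P_{\kappa(p_i)}$ and $2=|X|\,\h(E)$. Either $|X|=1$, again giving $\h(S)\le 2$, or the connected $X$ is ${\bf 2}$, giving $S\cong{\bf 2}\times Z$ and $P_j\cong Y\times Z$.
\item In the last case, if $Y$ is nontrivial then $\h(Z)\le 1$ and $\h(S)\le 2$; otherwise $S\cong{\bf 2}\times P_j$.
\end{enumerate}
So $R_S$ is the \emph{exponent} component $P_j$, not something matched to the base, and $\h(S)=3$ forces $\h(P_j)=2$. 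Running this for every top component of $Q$, and using the height-$\le 1$ case to rule out $\h(Q)\le 1$, gives the lemma. Your framework (irreducible factors, Hashimoto refinement, Theorem 10) is the right one. What is missing is the systematic use of $S\cong X\times Z$ together with $\h(\mathcal C(E^X))=|X|\,\h(E)$ to bound $|X|$.
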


\proof By \cite[Theorem 16]{FarBC}, we are done if $P$ or $Q$ is connected.  We may proceed more or less as in the first part of Proposition 2.3, only without assuming that $Q_0$ is directly irreducible.  (We must be careful about the choices we make since $Q_0$ is not necessarily directly irreducible.)  As in that proof, if $|X|=1$ or $|Z|=1$, then $\h(Q)\le\h(P)$.

If $\h(A_i)=1$, then by Lemma 2.6, $1=|X|\h(E)$, so $|X|=1$.

So we may assume $\h(A_i)=2$.  As $A_i$ is a factor of the connected $P_{\kappa(p_i)}$, that means $A_i\cong P_{\kappa(p_i)}$.  By Lemma 2.6, we may assume $|X|=2$.  Since $|X|$ is connected (being a factor of the connected $Q_0$), $X\cong\bf2$.

If $Y$ is non-trivial, then $\h(Y)\ge1$, so $\h(Z)\le1$ (since $\h(Y)+\h(Z)\le2$), and hence $\h(Q_0)\le1+1=2$.

So we may assume $|Y|=1$, and hence $Q_0\cong{\bf2}\times P_j$.

We could have started with any component of $Q$ of maximum height.  We may assume $\h(Q)=3$. \qed

{\it Proof of Theorem 2.1.} We are done if $P$ is an antichain, so suppose $\h(P)=1$.  By Lemma 2.11, $\h(Q)=1$.  By Theorem 2.9 and Lemma 2.10, the sets of non-trivial components of $P$ and $Q$ are in bijection with one another, with a bijection pairing order-isomorphic components, and so are the sets of the rest of the components, which are singletons.  Hence $P\cong Q$. \qed

\begin{lemma} A finite, connected, height $1$ ordered set is absolutely $\mathcal C$-indecomposable.
\end{lemma}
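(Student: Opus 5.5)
Since the precise definition of absolute $\mathcal C$-indecomposability is the one on \cite[p. 436]{FarBC}, my plan is to reduce it to a height count. Roughly, the definition says that whenever $P\cong\mathcal C(E^X)$ for finite ordered sets $E$ and $X$, the representation is trivial: $|X|=1$, and then $P\cong E$ up to the canonical identification. So let $P$ be finite, connected, of height $1$, and suppose $P\cong\mathcal C(E^X)$. The key tools will be Lemma 2.6, $\h(\mathcal C(B^A))=\h(B^A)=|A|\h(B)$ for connected $A,B$, together with Lemma 2.10, which says $P$ is directly irreducible.

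First I reduce to connected $E$ and $X$. Since $\mathcal C(E^X)$ is, by definition, the component of $E^X$ containing constant maps, a constant map $\langle e\rangle$ lies in it. Every map in that component sends each component of $X$ into the component of $E$ containing $e$. So we may replace $E$ by that component $E_0$; this uses \cite[Proposition 1]{FarBC} and Lemma 2.2. Next, writing $X=X_1+\cdots+X_r$ with each $X_s$ connected, \cite[Proposition 1]{FarBC} gives
$$
\mathcal C(E_0^X)\cong\prod_{s=1}^r\mathcal C(E_0^{X_s}).
$$
Since $\h(P)=1$, $E_0$ is non-trivial. Because $E_0$ is connected and non-trivial, each factor is non-trivial: it contains a copy of ${\bf 2}$ via two comparable constant maps. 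Direct irreducibility of $P$ (Lemma 2.10) then forces $r=1$, so $X$ is connected.

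With $E_0$ and $X$ connected, Lemma 2.6 gives
$$
1=\h(P)=\h\big(\mathcal C(E_0^X)\big)=|X|\,\h(E_0).
$$
Hence $|X|=1$ and $\h(E_0)=1$, so $\mathcal C(E_0^X)\cong E_0$, and $P\cong E_0$. This is exactly the trivial representation demanded by absolute $\mathcal C$-indecomposability.

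The main obstacle I expect is matching the exact wording of the definition in \cite{FarBC}. If it quantifies over isomorphisms $\mathcal C(E^X)\cong\mathcal C(F^Y)\times R$ (``absolutely'' suggesting stability under taking factors), I would apply the same argument to each directly-irreducible factor. In that case I would first use Hashimoto's Refinement Theorem and \cite[Theorem 2(3)]{FarBC} to split $\mathcal C(E^X)$ into irreducible factors $\mathcal C(A^{X_s})$ with $A$ a factor of $E$. Then the height identity of Lemma 2.6, together with $\h(P)=1$, again forces every exponent to be a singleton. I would also double-check the degenerate case where $E$ has a trivial component, which is excluded here because $P$ is non-trivial.
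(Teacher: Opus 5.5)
Your proof is correct and is essentially the paper's argument. The paper cites \cite[Lemma 8]{FarBC}, together with Lemma 2.10, to get $A$ and $B$ connected and finite; you instead re-derive the connectedness by hand, via the component $E_0$ and the product decomposition over the components of $X$ (and if, as Lemma 2.2 suggests, $\mathcal C(E^X)$ contains every constant map, connectedness of $P$ forces $E_0=E$); both arguments then finish with Lemma 2.6, $1=|X|\h(E)$. The one thing you assume rather than prove is finiteness of $E$ and $X$, which Lemma 2.6 needs, but it is immediate: the constant maps embed $E_0$ in $P$, and for $a<b$ in $E_0$ the maps sending $\uparrow x$ to $b$ and the rest of $X$ to $a$ are $|X|$ distinct elements of $\mathcal C(E_0^X)$.
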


\proof Assume $P$ is a finite, connected, height $1$ ordered set and $A$ and $B$ ordered sets such that $P\cong\mathcal C(A^B)$, where $B\ne\emptyset$.  By Lemma 2.10, $P$ is directly irreducible.  By \cite[Lemma 8]{FarBC}, $A$ and $B$ are connected and finite.

By Lemma 2.6 and \cite[Lemma 15(4)]{FarBC}, $1=\h(P)=|B|\h(A)$.  Hence $|B|=1$ and hence $P\cong A$. \qed

\begin{theorem} Let $P$ and $Q$ be finite ordered sets such that 
$$
P^P\cong Q^Q.
$$
\noindent  Assume that $P$ has a connected component of height $\h(P)$ that is absolutely $\mathcal C$-indecomposable and $Q$ has a connected component of height $\h(Q)$ that is directly irreducible.  Then $P\cong Q$.
\end{theorem}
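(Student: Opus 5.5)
We reduce to a matching of connected components through the summand of Proposition 2.7 that corresponds to a constant map $\psi$. If $P$ is an antichain, so are $P^P\cong Q^Q$ and hence $Q$, and we are done. So we assume $\h(P)\ge1$. By \cite[Theorem 16]{FarBC} we may also assume that neither $P$ nor $Q$ is connected.

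We use the notation of Proposition 2.7, with $P_1$ an absolutely $\mathcal C$-indecomposable component of height $\h(P)$ and $Q_1$ a directly irreducible component of height $\h(Q)$. The step I expect to be the main obstacle is confirming that absolute $\mathcal C$-indecomposability (as defined on \cite[p. 436]{FarBC}) entails direct irreducibility of $P_1$. I would check this against the definition first. Failing that, I would derive it from a factorization $P_1\cong V\times W$ together with Hashimoto's theorem, much as the height argument is used in Lemma 2.12. Granting it, Corollary 2.4 gives $\h(P)=\h(Q)$, and Corollary 2.8 gives $m=n$.

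Now take $\psi\equiv1$. The corresponding summand of $\mathcal A$ is $\prod_{r=1}^m\mathcal C(P_1^{P_r})$. By \cite[Theorem 2(3)]{FarBC}, each factor is directly irreducible, so this summand has exactly $m$ directly-irreducible factors. It is a connected component of $\mathcal A$, so $\varphi$ maps it onto a component $\prod_{j=1}^n\mathcal C(Q_{\chi(j)}^{Q_j})$ of $\mathcal B$ for some $\chi:\{1,\dots,n\}\to\{1,\dots,\ell\}$. Each of these $n=m$ factors is non-trivial, because $\h(Q)\ge1$; this holds even when $Q_j$ is a singleton, since then the factor is $Q_{\chi(j)}$. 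Counting irreducible factors, each factor on the $\mathcal B$ side is therefore directly irreducible. Hashimoto's Refinement Theorem then yields a bijection $r\mapsto j_r$ from $\{1,\dots,m\}$ onto $\{1,\dots,n\}$ with $\mathcal C(P_1^{P_r})\cong\mathcal C(Q_{\chi(j_r)}^{Q_{j_r}})$.

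Fix $r$ and apply \cite[Theorem 4]{FarBC}. This gives finite connected $E,X,Y,Z$ with
$$
P_1\cong\mathcal C(E^X)\text{, }Q_{\chi(j_r)}\cong\mathcal C(E^Y)\text{, }P_r\cong Y\times Z\text{, }Q_{j_r}\cong X\times Z.
$$
Since $P_1$ is absolutely $\mathcal C$-indecomposable and non-trivial, $|X|=1$ and $E\cong P_1$. Then Lemma 2.6 and \cite[Lemma 15(4)]{FarBC} give
$$
\h(Q)=\h(Q_{\chi(j_r)})=|Y|\,\h(P_1)=|Y|\,\h(P)=|Y|\,\h(Q),
$$
so $|Y|=1$, because $\h(Q)\ge1$. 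Therefore $P_r\cong Z\cong Q_{j_r}$. As $r\mapsto j_r$ is a bijection between the components of $P$ and those of $Q$ pairing isomorphic components, $P\cong Q$.
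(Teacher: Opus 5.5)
Your argument is correct and is essentially the paper's proof. The paper gets $\h(P)=\h(Q)$ and $m=n$ from Theorem 2.9, which rests on the Corollaries 2.4 and 2.8 you use; it then takes the constant map $\psi\equiv 1$ in Proposition 2.7, matches factors by Hashimoto's Refinement Theorem, and applies \cite[Theorem 4]{FarBC}, using absolute $\mathcal C$-indecomposability to force $|X|=1$, $E\cong P_1$, and heights to force $|Y|=1$. The step you flag is not an obstacle: direct irreducibility is built into absolute $\mathcal C$-indecomposability (which is why the proof of Lemma 2.12 checks it via Lemma 2.10), and the paper relies on this tacitly in the same way when it invokes Theorem 2.9.
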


\proof Let $P_1$ be the above component.  By Theorem 2.9, $\h(P)=\h(Q)$, $m=n$, and $k=\ell$ (i.e., $P$ and $Q$ have the same number of components and the same number of components of maximum height). Let $\Psi:[m]\to[k]$ be the map that sends $i\in[m]$ to $1$.  Then by Proposition 2.7, \cite[Lemma 9(1)]{FarBC}, and Hashimoto's Refinement Theorem, for all $i\in[m]$, there exists $j_i\in[n]$ and $\chi(j_i)\in[\ell]$ such that $\mathcal C(P_1^{P_i})\cong\mathcal C(Q_{\chi(j_i)}^{Q_{j_i}})$ and the map $i\mapsto j_i$ ($i\in[m]$) is one-to-one.  

Fix $i\in[m]$.  By \cite[Theorem 4]{FarBC}, there exist connected, ordered sets $E,X,Y,Z$ such that
$$
P_1\cong\mathcal C(E^X)\text{, }Q_{\chi(j_i)}\cong\mathcal C(E^Y)\text{, }P_i\cong Y\times Z\text{, }Q_{j_i}\cong X\times Z.
$$
\noindent Hence $P_1\cong E$ and $|X|=1$. But then $\h(Q_{\chi(j_i)})=|Y|\h(P_1)$, which would violate $\h(P)=\h(Q)$ unless $|Y|=1$ and hence $P_1\cong Q_{\chi(j_i)}$ and $P_i\cong Q_{j_i}$.

Thus $P\cong Q$. \qed

We get another proof of Theorem 2.1: We just need Lemmas 2.10 and 2.11, showing that $\h(Q)=1$.

The next step would be height $2$ or an extension of Theorem 2.9 that does not assume a component is directly irreducible.

\section{Trebled Ordered Sets}

Let $(R,\le_R)$ be a finite ordered set.  Define an equivalence relation $\sim$ on $R$ as follows: for $r,r'\in R$, $r\sim r'$ if $\underset{\circ}{\uparrow} r=\underset{\circ}{\uparrow} r'$ and $\overset{\circ}{\downarrow} r=\overset{\circ}{\downarrow} r'$.  Assume that the cardinality of every equivalence class is a multiple of $3$. 

Partition each equivalence class into three-element subsets and let $T$ be the collection of these three-element subsets. Define a binary relation $<_T$ on $T$ as follows: for $t,u\in T$, $t<_T u$ if $r<_R s$ for some $r\in t$, $s\in u$ (equivalently, if $r<_R s$ for all $r\in t$ and $s\in u$).  Then $<_T$ is irreflexive and transitive, so $(T,\le_T)$ is an ordered set.

If $(T',\le_T')$ is another such ordered set similarly defined from $R$, define an order-isomorphism from $T$ to $T'$ by arbitrarily mapping the three-element subsets of the equivalence class $[r]$ ($r\in R$) that are elements of $T$ bijectively to the three-element subsets of the equivalence class $[r]$ that are elements of $T'$.

As a corollary, if $A$ and $B$ are finite ordered sets and $\widetilde A\cong\widetilde B$, then $A\cong B$.

\begin{lemma} Let $A$ be an ordered set.  Then any connected component of $\widetilde A$ is either a singleton or of the form $\widetilde B$ for some ordered set $B$.
\end{lemma}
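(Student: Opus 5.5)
The plan is to work directly with the lexicographic-sum description of $\widetilde A$. Concretely, $\widetilde A$ is the set $A\times\{0,1,2\}$ (writing $(a,0)=a$, $(a,1)=\widetilde a$, $(a,2)=\widetilde{\widetilde a}$), where $(a,i)<(b,j)$ if and only if $a<b$ in $A$. Let $\pi:\widetilde A\to A$ be the projection $(a,i)\mapsto a$. The key steps, in order, are the following.

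First, I will check that $\pi$ is order-preserving. Indeed, $(a,i)<(b,j)$ forces $a<b$, and two triplets of the same element are incomparable. Consequently, if two elements of $\widetilde A$ are comparable, then their images are comparable, so any path of comparabilities in $\widetilde A$ projects to one in $A$. Hence every connected component of $\widetilde A$ lies inside $\pi^{-1}(C)$ for a single connected component $C$ of $A$. It therefore suffices to analyse $\pi^{-1}(C)$ for each component $C$. As an induced ordered subset of $\widetilde A$, $\pi^{-1}(C)$ is exactly $\widetilde C$, since the order on it is again given by $(a,i)<(b,j)\iff a<_C b$.

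Next, I will split into two cases. If $C=\{a\}$ is a singleton, then $a$ is comparable to no other element of $A$. So none of $a,\widetilde a,\widetilde{\widetilde a}$ is comparable to any element of $\widetilde A$ other than itself, and the three elements of $\pi^{-1}(C)$ are three singleton components. If $|C|\ge2$, I claim $\widetilde C$ is connected. Take $(a,i),(b,j)\in\widetilde C$. Since $C$ is connected, there is a zigzag $a=c_0,c_1,\dots,c_n=b$ in $C$ with $c_{r-1}$ and $c_r$ comparable and distinct. This lifts to the zigzag $(c_0,i),(c_1,0),\dots,(c_{n-1},0),(c_n,j)$, because comparability of $c_{r-1},c_r$ makes every triplet of $c_{r-1}$ comparable to every triplet of $c_r$.

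The only point needing care is the case $a=b$, $i\ne j$, where the trivial path has length $0$. Here I use $|C|\ge2$ together with connectedness of $C$ to pick some $c\in C$ comparable to $a$ with $c\neq a$. Then $(a,i),(c,0),(a,j)$ is a zigzag. Thus $\widetilde C$ is connected. Since it is a union of components of $\widetilde A$ by the first step, it is a component, and it has the form $\widetilde B$ with $B=C$.

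I expect no real obstacle. The main subtlety is the distinction just handled: triplets of the same element are never directly comparable, so connectivity among them must route through a neighbour, which exists precisely when the component of $A$ is not a singleton.
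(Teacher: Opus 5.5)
Your proof is correct, and it takes a somewhat different route from the paper's.

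The paper argues locally on a component $K$ of $\widetilde A$ with $|K|>1$. Every $c\in K$ is strictly comparable to some $d\in K$, so $\widetilde c$ and $\widetilde{\widetilde c}$ are comparable to $d$ as well. Hence $K$ is a union of whole triplet classes, and $K=\widetilde B$ where $B$ is a set of representatives of those classes. This needs neither the projection $\pi$ nor any lifting of zigzags, because the connectedness of $K$ is given rather than something to prove.

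Both arguments rest on the same observation you single out at the end: triplets of a non-isolated element all share its comparability partners. Your global route via $\pi$ costs a little more, since you must show $\pi^{-1}(C)$ is connected. In return it gives a sharper description of the components. The non-singleton components of $\widetilde A$ are exactly the sets $\widetilde C$ for the non-singleton components $C$ of $A$, so $B$ can be taken to be a connected component of $A$. The singleton components come in groups of three, one group for each isolated point of $A$.
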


\proof Let $C$ be such a component.  If $|C|>1$, then for all $c\in C$, there exists $d\in C$ such that $c<d$ or $c>d$. Hence $\widetilde c<d$ or $\widetilde c>d$, respectively, so $\widetilde c,\widetilde{\widetilde c}\in C$.  Select exactly one element from the three-element summand of the lexicographic sum containing $c$ for each $c\in C$ and let $B$ be that ordered set. \qed

\begin{observation} Let $A$ be an ordered set such that $\widetilde A$ is connected and $A$ has a minimal element (equivalently, $\widetilde A$ has a minimal element).  Then $\widetilde A$ is directly irreducible.
\end{observation}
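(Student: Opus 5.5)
The plan is to argue by contradiction using only the ``triplet'' structure of $\widetilde A$: every element $c$ of $\widetilde A$ has two other elements, $\widetilde c$ and $\widetilde{\widetilde c}$, with exactly the same strict up-set and strict down-set. I will show that a minimal element of a non-trivial direct product of connected ordered sets never has such a twin.

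First, the setup. Since $\widetilde A$ is connected, it is non-empty. Every element lies in a three-element class of twins, so $|\widetilde A|\ge3$. Suppose $\widetilde A\cong X\times Y$ with $|X|,|Y|\ge2$. Because the product is connected, both $X$ and $Y$ are connected. Transport everything along the isomorphism: the relation ``$\underset{\circ}{\uparrow} r=\underset{\circ}{\uparrow} r'$ and $\overset{\circ}{\downarrow} r=\overset{\circ}{\downarrow} r'$'' is defined purely order-theoretically, so it is preserved. Hence every element of $X\times Y$ has at least one distinct twin with the same strict up- and down-sets.

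Next, pick a minimal element $m$ of $\widetilde A$; its image is some $(x_0,y_0)$. Minimality in the product forces $x_0$ to be minimal in $X$: if $x<x_0$, then $(x,y_0)<(x_0,y_0)$. Likewise $y_0$ is minimal in $Y$. Since $X$ is connected with at least two elements, $x_0$ is comparable to some other element. Because $x_0$ is minimal, that element lies above it, so there is $x_1>x_0$. Similarly there is $y_1>y_0$.

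Now let $(x',y')\neq(x_0,y_0)$ be a twin of $(x_0,y_0)$. We have $(x_1,y_0)\in\underset{\circ}{\uparrow}(x_0,y_0)$, so $(x_1,y_0)>(x',y')$, which gives $y'\le y_0$. Minimality of $y_0$ then forces $y'=y_0$. Symmetrically, $(x_0,y_1)>(x',y')$ gives $x'\le x_0$, hence $x'=x_0$. So $(x',y')=(x_0,y_0)$, a contradiction, and $\widetilde A$ is directly irreducible.

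I expect no serious obstacle; the main point to check is that each factor really does have an element strictly above its minimal coordinate. This is where connectedness of the factors, inherited from connectedness of $\widetilde A$, and the non-triviality of the factors are used. The equivalence in the parenthetical of the statement is immediate: $a$ is minimal in $A$ iff each triplet $a,\widetilde a,\widetilde{\widetilde a}$ is minimal in $\widetilde A$.
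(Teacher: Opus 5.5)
Your proof is correct and follows essentially the same route as the paper. Both arguments send a minimal element $m$ and its triplet $\widetilde m$ through the product decomposition, then use elements strictly above $m$ that move in only one coordinate to force the twin below $m$ in each coordinate. The only difference is cosmetic: the paper takes an element above $m$ (in effect an upper cover), ``without loss of generality'' changing the second coordinate, and derives triviality of the other factor from the antichain $\{m,\widetilde m\}$. You assume both factors non-trivial from the start and finish with minimality of the coordinates, which is slightly cleaner and makes that implicit step unnecessary.
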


\proof Since $A\ne\emptyset$, $|\widetilde A|>1$.  Let $B$ and $C$ be ordered sets such that $\phi:\widetilde A\cong B\times C$.  Let $m$ be a minimal element of $\widetilde A$ and let $\phi(m)=(b,c)$.  Since $\widetilde A$ is connected and $\{m,\widetilde m\}$ is an antichain, there exists $n\in\widetilde A$ such that $m<n$ (and so $\widetilde m<n$).  Let $\phi(n)=(b',c')$ and, without loss of generality, $b=b'$ and $c<c'$.  Let $\phi(\tilde m)=(b'',c'')$.  Then $b''\le b$ and $c''\le c'$.  We cannot have $c''\le c$ since $\{m,\widetilde m\}$ is an antichain.

If $|B|>1$, then, because $B$ is connected and $m$ minimal, there exists $\overline b\in B$ such that $b<\overline b$.  Then $\phi(m)<(\overline b,c)$, so $\phi(\widetilde m)=(b'',c'')<(\overline b,c)$.  Thus $c''\le c$, a contradiction. \qed

\section{Endomorphism Ordered sets of Finite Trebled Ordered Sets Determine the Ordered et}

In this section, we prove

\begin{theorem} If $P$ and $Q$ are finite ordered sets and ${\widetilde P}^{\widetilde P}\cong{\widetilde Q}^{\widetilde Q}$, then $P\cong Q$.
\end{theorem}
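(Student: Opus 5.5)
The strategy is to reduce to Theorem 2.9 and Theorem 2.13 (or their method), using the structure of trebled ordered sets from Section 3. Write $\widetilde P=\widetilde P_1+\cdots+\widetilde P_r+S$, where by Lemma 3.1 each non-singleton component has the form $\widetilde B$ and $S$ is an antichain of singleton components. By Observation 3.2 (every finite $B$ has a minimal element), each non-trivial component $\widetilde B$ is directly irreducible. The same holds for $\widetilde Q$. If $\widetilde P$ or $\widetilde Q$ is an antichain we are done, since then both are antichains of the same size and $|P|=|Q|$. Otherwise both $\widetilde P$ and $\widetilde Q$ have a component of maximum height that is directly irreducible, so Theorem 2.9 applies to the pair $\widetilde P,\widetilde Q$. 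It gives the following:
\begin{enumerate}[(i)]
\item equal height, equal numbers of components, equal numbers of elements;
\item a bijection between the maximum-height components of $\widetilde P$ and those of $\widetilde Q$ that pairs order-isomorphic components.
\end{enumerate}

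The main work is to extend (ii) from maximum-height components to all components. Following the proof of Theorem 2.13, I will take $\Psi:[m]\to[k]$ to be the constant map onto a maximum-height component $\widetilde P_1$. Proposition 2.7 and Hashimoto's Refinement Theorem then give, for each $i$, an isomorphism
$$
\mathcal C(\widetilde P_1^{\widetilde P_i})\cong\mathcal C(\widetilde Q_{\chi(j_i)}^{\widetilde Q_{j_i}}).
$$
Applying \cite[Theorem 4]{FarBC} yields connected $E,X,Y,Z$ with $\widetilde P_1\cong\mathcal C(E^X)$, $\widetilde Q_{\chi(j_i)}\cong\mathcal C(E^Y)$, $\widetilde P_i\cong Y\times Z$ and $\widetilde Q_{j_i}\cong X\times Z$. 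The height argument of Corollary 2.8 then forces either $|Y|=1$ or $|Z|=1$. If $|Z|=1$, then ${\bf2}^{\widetilde P_i}$ embeds in a maximum-height component of $\widetilde Q$. I will rule this out by a height count: $\h({\bf 2}^{\widetilde P_i})=|\widetilde P_i|\ge 3\h(\widetilde P_i)+3>\h(\widetilde P)$ whenever $\widetilde P_i$ is non-trivial, because each level of a trebled set has at least three elements. This needs care when $\widetilde P_i$ is low, so I will compare against $\h(\widetilde P_1)$ via $|\widetilde P_i|\ge 3$ together with Lemma 2.6. Granting that, $|Y|=1$, so $\widetilde P_i\cong Z$ is a direct factor of the directly irreducible $\widetilde Q_{j_i}$. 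Hence $\widetilde Q_{j_i}\cong \widetilde P_i$, or $|Z|=1$ and $\widetilde P_i$ is trivial. Since $i\mapsto j_i$ is injective and $m=n$, this matches the non-trivial components bijectively, and the singleton components are then matched by the element count in (i).

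The step I expect to be the main obstacle is the case $|Z|=1$ with $X$ non-trivial. Here $\widetilde P_1\cong \mathcal C(E^X)$, with $\widetilde P_1$ directly irreducible and trebled but not obviously absolutely $\mathcal C$-indecomposable. I will attack it through Lemma 2.6: $\h(\widetilde P_1)=|X|\h(E)$ and $\h(\widetilde Q_{\chi(j_i)})=|Y|\h(E)$. Equality of maximal heights then gives $|X|=|Y|$. I will also use the trebled structure: the minimal elements of $\mathcal C(E^X)$ are maps that are constant on components and take minimal values, so the number of minimal elements of $\mathcal C(E^X)$ equals the number of minimal elements of $E$. Comparing the antichain-of-triplets structure at each level, which forces every level of $\mathcal C(E^X)$ to come in triples of twins, should force $|X|=1$ (two-element $X$ creates elements of $E^X$ with non-twin neighbours). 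This is the Campion-style counting step. Once every $\widetilde P_i\cong\widetilde Q_{j_i}$, we get $\widetilde P\cong\widetilde Q$, and the corollary in Section 3 gives $P\cong Q$.
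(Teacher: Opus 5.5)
\textbf{There is a genuine gap at the crux.} Your reduction is sound up to the point where \cite[Theorem 4]{FarBC} gives connected $E,X,Y,Z$ with $\widetilde P_1\cong\mathcal C(E^X)$, $\widetilde Q_{\chi(j_i)}\cong\mathcal C(E^Y)$, $\widetilde P_i\cong Y\times Z$ and $\widetilde Q_{j_i}\cong X\times Z$. Lemma 2.6 together with equal maximum heights then gives $|X|=|Y|$. From here everything hinges on proving $|X|=1$, equivalently on excluding $|Z|=1<|Y|$, and neither of your arguments does this.

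The height count fails for low components. The bound $|\widetilde P_i|\ge 3\h(\widetilde P_i)+3$ is true, but it need not exceed $\h(\widetilde P)$. For $\widetilde P=\widetilde{\mathbf 7}+\widetilde{\mathbf 2}$ and $\widetilde P_i=\widetilde{\mathbf 2}$ you get $|\widetilde P_i|=6=\h(\widetilde P)$, so embedding $\mathbf 2^{\widetilde P_i}$ in a maximum-height component gives no contradiction. This is exactly why Corollary 2.8 only handles the factor $\mathcal C(P_1^{P_1})$, and why Theorem 2.13 assumes absolute $\mathcal C$-indecomposability.

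Your fallback ``counting'' step starts from a false claim: minimal elements of $\mathcal C(E^X)$ need not be constant. Take $X=\{y_1,y_2<x\}$ and $E=\{e_1,e_2<e_3\}$. The map $y_1\mapsto e_1$, $y_2\mapsto e_2$, $x\mapsto e_3$ is minimal in $E^X$ and lies below $\langle e_3\rangle$. Beyond that, ``should force $|X|=1$'' is not an argument, so the theorem's real content is left unproved.

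\textbf{How the gap can be closed.} A short twin argument suffices, which would make your route a genuine alternative to the paper's. Since $E$ is connected and non-trivial, pick $e$ minimal in $E$ and $e\lessdot e'$. Because $\mathcal C(E^X)\cong\widetilde P_1$ is trebled, $\langle e\rangle$ has a twin $g\neq\langle e\rangle$ with the same strict up- and down-sets. Then $g$ is minimal, and $g<f_x:=\langle e\rangle_{x\mapsto e'}$ for every maximal $x\in X$.
\begin{enumerate}[(a)]
\item If $X$ has two maximal elements, the two corresponding inequalities give $g\le\langle e\rangle$, so $g=\langle e\rangle$.
\item If $X$ has a top $t$ and $|X|\ge 2$, they give $g(y)=e$ for $y\ne t$ and $g(t)\in\{e,e'\}$. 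But $g(t)=e'$ would mean $g=f_t>\langle e\rangle$, contradicting minimality.
\end{enumerate}
Either way no twin exists, so $|X|=1$. Hence $|Y|=1$ and $\widetilde P_i\cong Z\cong\widetilde Q_{j_i}$ for every $i$. In effect this shows non-trivial connected trebled sets are absolutely $\mathcal C$-indecomposable, so Theorem 2.13, with Observation 3.2 and the corollary in Section 3, finishes the proof.

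\textbf{The paper's route.} The paper does not use Section 2 at all. Following Campion, it fixes a maximum chain $D$ in the endomorphism poset with all $C_p$ equal. It characterizes each $a_i$ intrinsically inside the distributive lattice $D_i$, using the triplets to eliminate competing candidates. It proves $\phi[D_i]=\overline D_i$ by induction and deduces $(\mathbf{h+1})^{\widetilde P}\cong(\mathbf{\overline h+1})^{\widetilde Q}$. Finally it gets $\widetilde P\cong\widetilde Q$ by passing to join-irreducibles and cancelling the factor $\mathbf h$.
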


The results come from a conjectural argument of Tim Campion \cite{CamBD}, who gave the first author permission to publish it \cite{CamBE}.

\begin{notation} \rm For the rest of this section, let $P$ and $\overline P$ be finite ordered sets.  (Notions that we define for $P$, when applied to $\overline P$, will have an overbar.) Let $n:=|P|$ and $h:=\h(P)$.  Let $D=\{d_0,d_1,\dots,d_{nh}\}$ be a maximum-sized chain in $P^P$ where $d_0>d_1>\dots>d_{nh}$.  For 
$$
i\in\{0,1,\dots,nh-1\}\text{,}
$$
\noindent let $p_i\in P$ be such that $d_i(p_i)\gtrdot d_{i+1}(p_i)$.  For $p\in P$, let
$$
C_p:=\{d_i(p)\mid 0\le i\le nh\}\text{,}
$$
\noindent a chain of height $h$ in $P$ (from the way one proves \cite[Lemma 15(4)]{FarBC}) with top element $1_{C_p}$ and bottom element $0_{C_p}$.  For $i\in\{0,1,\dots,nh\}$, let
$$
D_i:=\{f\in P^P\mid f\ge d_i\text{ and } f(p)\in C_p\text{ for all }p\in P\}\text{;}
$$
\noindent for $i\in\{0,1,\dots,nh-1\}$, define $a_i:P\to P$ inductively for all $p\in P$ (we show it is well-defined later) by
$$
a_i(p)=\begin{cases}
d_i(p_i)\ \text{if}\ p=p_i\text{;}\\
\max\{x\in C_p\mid x\le d_{i+1}(p_i)\text{ and }x\le a_i(q)\\
\qquad\qquad\text{ for all }q\in\downarrow p_i\text{ such that }p\lessdot q\}\ \text{if}\  p<p_i\text{;}\\
1_{C_p}\ \text{if}\ p\nleq p_i\text{;}
\end{cases}
$$
\noindent and define $b_i:=(a_i)_{p_i\mapsto d_{i+1}(p_i)}$.
\end{notation}

\begin{observation} For $p,q\in P$ such that $p<q$, we have $0_{C_p}=0_{C_q}$ and $1_{C_p}=1_{C_q}$. 
\end{observation}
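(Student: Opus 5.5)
The plan is a pure height argument; it needs nothing beyond the fact, recorded in Notation 4.2, that each $C_p$ is a chain of height $h=\h(P)$. Write out the definitions first. Since $d_0>d_1>\cdots>d_{nh}$ in $P^P$, for every $p\in P$ we have $d_0(p)\ge d_1(p)\ge\cdots\ge d_{nh}(p)$. So $C_p$ is a chain with $1_{C_p}=d_0(p)$ and $0_{C_p}=d_{nh}(p)$. Because $C_p$ has height $h$, it contains a chain of $h+1$ elements from $d_{nh}(p)$ up to $d_0(p)$.

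\textbf{Tops.} Fix $p<q$. Since $d_0$ is order-preserving, $d_0(p)\le d_0(q)$. Suppose $d_0(p)<d_0(q)$. Appending $d_0(q)$ to a chain of $h+1$ elements of $C_p$ ending at $d_0(p)$ gives a chain of $h+2$ elements in $P$. This contradicts $\h(P)=h$, so $d_0(p)=d_0(q)$, that is, $1_{C_p}=1_{C_q}$.

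\textbf{Bottoms.} The argument is dual. Since $d_{nh}$ is order-preserving, $d_{nh}(p)\le d_{nh}(q)$. If this inequality were strict, then putting $d_{nh}(p)$ beneath a chain of $h+1$ elements of $C_q$ starting at $d_{nh}(q)$ would again give a chain of $h+2$ elements in $P$. Hence $0_{C_p}=0_{C_q}$.

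There is no real obstacle. The only point needing care is to use that the chain $C_p$ has height exactly $h$, which is the full height of $P$, rather than merely being some chain. One could instead quote the remark in Section 1 that the endpoints of a maximum-sized chain in $Y^X$ are constant on components, but the direct argument above is self-contained and shorter.
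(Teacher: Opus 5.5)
Your argument is correct, but it rests on a different imported fact than the paper's. The paper's proof is a one-line citation: the endpoints $d_0$ and $d_{nh}$ of a maximum-sized chain in $P^P$ are constant on each connected component of $P$ (the remark in Section 1), and $p<q$ puts $p$ and $q$ in the same component. You instead start from the other fact recorded in Notation 4.2, that each $C_p$ has exactly $h+1$ elements, and use a chain-extension argument. Applied along paths of comparabilities, your argument in fact reproves the cited constancy on components. The paper's route gains brevity; yours ties the observation to the same property of $D$ that the rest of Section 4 relies on.

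Your proof is only as self-contained as that height claim, which the paper also merely imports from the proof of Lemma 15(4) in the earlier paper. It is easy to supply. For each $p$, the weakly decreasing sequence $d_0(p)\ge d_1(p)\ge\cdots\ge d_{nh}(p)$ has at most $h$ strict descents, since its values form a chain in $P$. Each of the $nh$ strict steps $d_i>d_{i+1}$ produces a strict descent in at least one coordinate. So the total number of descents lies between $nh$ and $n\cdot h$, which forces exactly $h$ descents at every $p$, i.e.\ $|C_p|=h+1$.
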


\proof The maximal and minimal elements of a chain of maximum height in $P^P$ are constant on each connected component of $P$. \qed

\begin{lemma} Let $i\in\{0,1,\dots,nh\}$.  Then the join and meet of every pair of elements in $D_i$ exist in $P^P$.  Indeed, $D_i$ is a bounded distributive lattice with least element $d_i$.  For $e,f\in D_i$, $p\in P$, $(e\vee f)(p)=\max\{e(p),f(p)\}$ and $(e\wedge f)(p)=\min\{e(p),f(p)\}$.
\end{lemma}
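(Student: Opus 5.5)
The plan is to prove that pointwise max and min of two members of $D_i$ again lie in $D_i$. Once this is established, they are automatically the join and meet in all of $P^P$, and distributivity follows.

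Fix $e,f\in D_i$ and define $g(p):=\max\{e(p),f(p)\}$ and $k(p):=\min\{e(p),f(p)\}$. These are well defined because $e(p),f(p)\in C_p$ and $C_p$ is a chain. The main step is to show that $g$ and $k$ are order-preserving. Take $p<q$ in $P$. Then $e(p)\le e(q)\le g(q)$ and $f(p)\le f(q)\le g(q)$. Since $g(p)$ is one of $e(p),f(p)$, we get $g(p)\le g(q)$. Dually, $k(p)\le e(p)\le e(q)$ and $k(p)\le f(p)\le f(q)$, so $k(p)\le k(q)$. Note that comparing elements of $C_p$ with elements of $C_q$ only uses the order of $P$, so no compatibility between the chains $C_p$ and $C_q$ is needed here. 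Also $g(p),k(p)\in C_p$, and $g,k\ge d_i$ pointwise because $e,f\ge d_i$. Hence $g,k\in D_i$.

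Next, $g$ is the least upper bound of $\{e,f\}$ in $P^P$. Any $u\in P^P$ with $u\ge e,f$ satisfies $u(p)\ge e(p),f(p)$, hence $u(p)\ge g(p)$ for every $p$. Since $g$ itself is in $P^P$ and is above both $e$ and $f$, we have $g=e\vee f$. Dually, $k=e\wedge f$ in $P^P$. Because these joins and meets are computed pointwise in the chains $C_p$, the lattice $D_i$ embeds as a sublattice of $\prod_{p\in P}C_p$ via $f\mapsto (f(p))_{p\in P}$. A product of chains is distributive and sublattices of distributive lattices are distributive, so $D_i$ is distributive.

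For boundedness, $d_i\in D_i$ because $d_i(p)\in C_p$ by definition of $C_p$, and $d_i$ is clearly least. For the top, $D_i$ is finite and nonempty, and it is closed under binary joins, so the join of all its elements lies in $D_i$ and is its greatest element. Alternatively one checks directly that $p\mapsto 1_{C_p}$ is order-preserving using Observation 4.3 (for $p<q$, $1_{C_p}=1_{C_q}$), and that it dominates $d_i$.

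The only point requiring care is the order-preservation of the pointwise max and min. As shown above, it reduces to the elementary inequalities $e(p)\le e(q)$ and $f(p)\le f(q)$ together with the fact that each value lies in a chain. So I do not expect a genuine obstacle.
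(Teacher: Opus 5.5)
Your proof is correct and follows the same route as the paper. Both arguments take the pointwise maximum and minimum in the chains $C_p$, check that these are order-preserving and lie in $D_i$ (hence are the join and meet in $P^P$), and deduce distributivity from the pointwise computation in chains. Your order-preservation step avoids the paper's ``without loss of generality'' case split, and you explicitly justify the existence of a top element, which the paper leaves implicit.
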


\proof Clearly $d_i$ is the least element of $D_i$.  Let $e,f\in D_i$.  Define $g,h:P\to P$ for all $p\in P$ by $g(p)=\max\{e(p),f(p)\}$ and $h(p)=\min\{e(p),f(p)\}$.  For all $p\in P$, $g(p),h(p)\in C_p$ and $g(p),h(p)\ge d_i(p)$.

Assume $p,q\in P$ are such that $p\le q$.  We will show $g(p)\le g(q)$ and $h(p)\le h(q)$.  Without loss of generality, $g(p)=e(p)$ and $g(q)=f(q)$, so $h(p)=f(p)$ and $h(q)=e(q)$.  Thus $g(p)=e(p)\le e(q)\le f(q)=g(q)$ and $h(q)=e(q)\ge e(p)\ge f(p)=h(p)$.  Hence $g,h\in D_i$.  Thus $g=e\vee f$ and $h=e\wedge f$ in $P^P$.

Because $\vee$ and $\wedge$ are calculated pointwise and each $C_p$ is a chain ($p\in P$), $D_i$ is a distributive lattice. \qed

\begin{corollary} There is an order-embedding from $D_{nh}$ into ${\bf (h+1)}^P$.  It is an order-isomorphism if $|\{C_p\mid p\in P\}|=1$.
\end{corollary}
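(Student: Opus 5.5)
The plan is to identify each chain $C_p$ with $\mathbf{h+1}$ by position and encode each $f\in D_{nh}$ by its vector of positions. Each $C_p$ has height $h$, so it has exactly $h+1$ elements. Let $\iota_p:C_p\to\mathbf{h+1}$ be the unique order-isomorphism. Define
$$
\Theta:D_{nh}\to({\bf h+1})^P,\qquad \Theta(f)(p):=\iota_p\big(f(p)\big).
$$
This makes sense because every $f\in D_{nh}$ satisfies $f(p)\in C_p$ for all $p\in P$. Here $({\bf h+1})^P$ is read as the product of $|P|$ copies of the chain, since the maps need not be order-preserving; this is the product reading forced by $D_{nh}$ being a distributive lattice (Lemma 4.4).

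The first step is to check that $\Theta$ is an order-embedding. For $f,g\in D_{nh}$ we have $f\le g$ iff $f(p)\le g(p)$ for all $p$. Both values lie in the chain $C_p$, and $\iota_p$ is an order-isomorphism, so this holds iff $\Theta(f)(p)\le\Theta(g)(p)$ for all $p$. Hence $\Theta$ is an order-embedding, and in particular injective.

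The second step is surjectivity when all the chains $C_p$ coincide with a single chain $C$. In that case every $\iota_p$ equals one fixed $\iota$. Given any $v$ in the product, put $f(p):=\iota^{-1}(v(p))\in C$, and check that $f\in D_{nh}$. Membership requires $f\in P^P$, i.e. $f$ order-preserving, and $f\ge d_{nh}$.

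For $f\ge d_{nh}$: by Observation 4.3, or directly since $d_{nh}$ is the bottom of a maximum chain, $d_{nh}$ takes the value $0_{C_p}=0_C$ everywhere, so the inequality is automatic. Order-preservation is the obstacle. An arbitrary vector need not give a monotone $f$, so under the product reading one must restrict to monotone vectors, i.e. to $({\bf h+1})^P$ taken as order-preserving maps. With that reading, $v$ monotone gives $f=\iota^{-1}\circ v$ monotone. Conversely, every $f\in D_{nh}$ has $\Theta(f)=\iota\circ f$ monotone when all chains coincide.

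I therefore expect the main subtlety to be fixing the intended meaning of $({\bf h+1})^P$. I would state the result as an order-isomorphism onto the order-preserving maps $P\to{\bf h+1}$ when $|\{C_p\}|=1$. In general, $\Theta$ embeds $D_{nh}$ into the product of copies of ${\bf h+1}$ indexed by $P$. One can refine the general case by noting that $p<q$ forces $C_p$ and $C_q$ to share endpoints (Observation 4.3), but only the embedding is needed.
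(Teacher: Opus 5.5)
There is a genuine gap in the first sentence of the corollary. In this paper $Y^X$ always denotes the ordered set of \emph{order-preserving} maps $X\to Y$. So ${\bf (h+1)}^P$ is the set of order-preserving maps $P\to{\bf h+1}$, not the full product $\prod_{p\in P}{\bf h+1}$. You replace the codomain by the full product, justified by the claim that the maps $\Theta(f)$ ``need not be order-preserving''. That claim is false. The appeal to Lemma 4.4 forces nothing either, since order-preserving maps into a chain also form a distributive lattice under pointwise max and min.

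Consequently, in the general case you prove only a weaker statement: an embedding into a larger ordered set. Your two cases also end up using two different codomains. This matters downstream. In the proof of Theorem 4.1 the general embedding is applied to $\overline D_{\overline n\overline h}$, where nothing is known about the chains $\overline C_{\overline p}$, to get $|{\bf (h+1)}^P|\le|{\bf (\overline h+1)}^{\overline P}|$. An embedding into $\prod_{\overline p\in\overline P}{\bf \overline h+1}$ only bounds by $(\overline h+1)^{\overline n}$, which does not support the symmetric cardinality argument.

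The missing observation is the content of the paper's one-line proof. All your positional isomorphisms $\iota_p$ are restrictions of a single order-preserving map on $P$: the height function $x\mapsto\h(\downarrow x)$ from $P$ to ${\bf h+1}$. To see this, let $c_0<c_1<\dots<c_h$ be the elements of $C_p$, a chain of length $h=\h(P)$. Then $c_j$ has height at least $j$. A longer chain below $c_j$, followed by $c_{j+1},\dots,c_h$, would have length greater than $h$, so $\h(\downarrow c_j)=j=\iota_p(c_j)$.

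Hence $\Theta(f)$ is the composite of the height function with $f$. Both are order-preserving, so $\Theta(f)\in{\bf (h+1)}^P$ for every $f\in D_{nh}$, and your order-embedding argument then goes through verbatim. Your surjectivity argument for the case $|\{C_p\mid p\in P\}|=1$ is correct: pull back along $\iota^{-1}$ and note that $d_{nh}$ is pointwise $0_C$. That is exactly what the paper leaves implicit.
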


\proof Each $C_p$ is order-isomorphic to $\bf h+1$ via the height function, which is order-preserving. \qed

\begin{proposition} Let $i\in\{0,1,\dots,nh-1\}$.  The function $a_i$ is well-defined and $a_i\in D_i$.  It has a unique lower cover $b$ in $P^P$ such that $b\ngeq d_i$ but $b\ge d_{i+1}$, namely $b_i$.

Moreover, $b_i\wedge e$ exists in $P^P$ for all $e\in D_i$ and
$$
D_{i+1}=D_i\cup\{b_i\wedge e\mid e\in D_i\}.
$$

If $|\{C_p\mid p\in P\}|=1$, then $a_i(p)=d_{i+1}(p_i)$ for all $p\in\overset{\circ}{\downarrow} p_i$.
\end{proposition}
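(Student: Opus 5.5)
The plan is to verify each assertion directly from the definitions. Throughout I use two facts. First, $d_i$ and $d_{i+1}$ agree everywhere except at $p_i$, where $d_i(p_i)\gtrdot d_{i+1}(p_i)$. Second, every $C_p$ is a chain containing all the values $d_j(p)$.

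\emph{Well-definedness of $a_i$.} I would argue by downward induction on $\downarrow p_i$, which is finite, proving simultaneously that $d_i(p)\le a_i(p)$ and $a_i(p)\in C_p$. For $p<p_i$ we have $d_i(p)=d_{i+1}(p)\le d_{i+1}(p_i)$. For every $q\in\downarrow p_i$ with $p\lessdot q$, the inductive hypothesis gives $d_i(p)\le d_i(q)\le a_i(q)$. So $d_i(p)$ lies in the set whose maximum defines $a_i(p)$. That set is a nonempty subset of the chain $C_p$, so the maximum exists and is $\ge d_i(p)$. For $p=p_i$ and for $p\nleq p_i$, the inequality $a_i\ge d_i$ is immediate.

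\emph{Order-preservation of $a_i$.} Take $p<q$. If $q\nleq p_i$, then $a_i(q)=1_{C_q}=1_{C_p}$ by Observation 4.4, and this dominates everything in $C_p$. The case $p\nleq p_i$ with $q\le p_i$ cannot occur. If $q=p_i$, then $a_i(p)\le d_{i+1}(p_i)<a_i(p_i)$. If $p<q<p_i$, choose a chain $p\lessdot q'\le q$ inside $\downarrow p_i$; then $a_i(p)\le a_i(q')\le a_i(q)$ by induction along covers. Hence $a_i\in D_i$.

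\emph{The map $b_i$.} It is order-preserving, because $a_i(p)\le d_{i+1}(p_i)$ for $p<p_i$ and nothing changes above $p_i$. It is a lower cover of $a_i$ by the paper's description of covers in $P^P$. We have $b_i\ge d_{i+1}$, since $b_i(p_i)=d_{i+1}(p_i)$ and $b_i\ge d_i\ge d_{i+1}$ elsewhere. Finally $b_i\ngeq d_i$.

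For uniqueness, let $b\lessdot a_i$ with $b\ge d_{i+1}$ and $b\ngeq d_i$. Then $b$ differs from $a_i$ at exactly one point $x$. Since $b$ fails to dominate $d_i$ only where $d_i\ne d_{i+1}$, we get $x=p_i$. Then $d_{i+1}(p_i)\le b(p_i)<d_i(p_i)$, and the covering relation forces $b(p_i)=d_{i+1}(p_i)$, so $b=b_i$.

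\emph{Meets with $b_i$.} Every value of $b_i$ lies in the appropriate $C_p$. Therefore the argument of Lemma 4.5 applies verbatim to $b_i$ and any $e\in D_i$: the pointwise minimum is order-preserving and is the meet in $P^P$. This meet lies above $d_{i+1}$ and takes values in the chains $C_p$, so it lies in $D_{i+1}$. Together with $D_i\subseteq D_{i+1}$, this gives "$\supseteq$" in the displayed equation.

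\emph{The reverse inclusion.} Let $f\in D_{i+1}$ with $f\notin D_i$. Then $f(p_i)<d_i(p_i)$, and since $C_{p_i}$ is a chain, $f(p_i)=d_{i+1}(p_i)$. I would then prove $f\le b_i$ pointwise by downward induction on $\downarrow p_i$, as in the well-definedness step: $f(p)\le f(p_i)=d_{i+1}(p_i)$ and $f(p)\le f(q)\le a_i(q)$, so $f(p)\le a_i(p)$. Next put $e:=f_{p_i\mapsto d_i(p_i)}$. This map is order-preserving, because for $q>p_i$ we have $f(q)\ge d_{i+1}(q)=d_i(q)\ge d_i(p_i)$. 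Hence $e\in D_i$, and $b_i\wedge e=f$.

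\emph{The case $|\{C_p\}|=1$.} Write $C$ for the common chain. By downward induction, every $a_i(q)$ with $q\in\downarrow p_i$ is $\ge d_{i+1}(p_i)$. Hence $d_{i+1}(p_i)$, which lies in $C=C_p$, belongs to the defining set, so the maximum equals $d_{i+1}(p_i)$.

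I expect the main obstacle to be the order-preservation of $a_i$ and the correct choice of the witness $e$ for $f$. A first attempt with $e=f\vee a_i$ fails, because it yields $b_i$ rather than $f$. The one-point modification $e=f_{p_i\mapsto d_i(p_i)}$ avoids this problem.
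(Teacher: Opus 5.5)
Your proof is correct and follows essentially the same route as the paper: the same downward induction giving $d_i\le a_i$, the same one-point argument for the uniqueness of $b_i$, and the same witness for the reverse inclusion, since the paper's $e:=d_i\vee f$ is exactly your $f_{p_i\mapsto d_i(p_i)}$. Your extra checks fill in details the paper leaves implicit: nonemptiness of the defining set via $d_i(p)$, and order-preservation of $a_i$, which the paper dismisses as ``by construction.'' Note only that the results you cite are Observation 4.3 and Lemma 4.4 in the paper's numbering.
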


\proof The function $a_i$ is well-defined, since by Observation 4.3 the maxima exist and $1_{C_p}=1_{C_q}$ for all $p,q\in P$ such that $p<q$.  By construction, $a_i\in P^P$.

For $q\in\overset{\circ}{\downarrow} p_i$, $d_i(q)=d_{i+1}(q)\le d_{i+1}(p_i)$; if our induction hypothesis is that $d_i(q)\le a_i(q)$ for $q\in\downarrow p_i\cap\underset{\circ}{\uparrow} p$ ($p\in\overset{\circ}{\downarrow} p_i$), then $d_i(p)\le d_i(q)\le a_i(q)$; $d_i(p)\in C_p$; so the maximum $a_i(p)\ge d_i(p)$.  Hence $d_i\le a_i$, and $a_i\in D_i$.

We have $b_i\in P^P$; $b_i\in D_{i+1}$; and $b_i\lessdot a_i$.  Also, $d_i\nleq b_i$ since 
$$
d_i(p_i)\nleq d_{i+1}(p_i)=b_i(p_i).
$$

Now assume $b\in P^P$; $b\lessdot a_i$; $d_{i+1}\le b$; and $d_i\nleq b$.  Then there exists $p\in P$ such that $d_i(p)\nleq b(p)$, but $d_{i+1}(p)\le b(p)$. Hence $d_i(p)\ne d_{i+1}(p)$, so $p=p_i$ and $b=(a_i)_{p_i\mapsto b(p_i)}$.  Now $d_{i+1}(p_i)\le b(p_i)<a_i(p_i)=d_i(p_i)$ and $d_{i+1}(p_i)\lessdot d_i(p_i)$, so $b(p_i)=d_{i+1}(p_i)$ and thus $b=b_i$.

For all $e\in D_i$, we have $b_i,e\in D_{i+1}$, so $b_i\wedge e$ exists in $P^P$ by Lemma 4.4 and $b_i\wedge e\in D_{i+1}$.

Let $f\in D_{i+1}-D_i$.  Now $d_{i+1}\le f$ but $d_i\nleq f$, so there exists $p\in P$ such that $d_i(p)\nleq f(p)$ but $d_{i+1}(p)\le f(p)$, so $p=p_i$.  Since $d_{i+1}(p_i)\le f(p_i)\in C_{p_i}$ but $d_i(p_i)\nleq f(p_i)$, then $f(p_i)=d_{i+1}(p_i)$.

Let $e:=d_i\vee f\in D_i$.  We now show $b_i\wedge e=f$: Let $p\in P$.  If $p<p_i$, then $d_i(p)\le a_i(p)$ implies $d_i(p)\le b_i(p)$; also $f(p)\le f(p_i)=d_{i+1}(p_i)\le a_i(p_i)$; so $f(p)\le f(q)\le a_i(q)$ for all $q\in\downarrow p_i\cap \underset{\circ}{\uparrow} p$ by induction, and hence 
$$
f(p)\le a_i(p)=b_i(p)\text{;}
$$
\noindent thus $[b_i\wedge(d_i\vee f)](p)=d_i(p)\vee f(p)=d_{i+1}(p)\vee f(p)=f(p)$.

Further, 
$$
b_i(p_i)\wedge[d_i(p_i)\vee f(p_i)]=d_{i+1}(p_i)\wedge[d_i(p_i)\vee d_{i+1}(p_i)]=d_{i+1}(p_i)=f(p_i).
$$
\noindent If $p\nleq p_i$, then 
$$
b_i(p)\wedge[d_i(p)\vee f(p)]=1_{C_p}\wedge[d_i(p)\vee f(p)]=d_i(p)\vee f(p)=d_{i+1}(p)\vee f(p)=f(p).
$$

Hence in all cases $(b_i\wedge e)(p)=f(p)$.

If $|\{C_p\mid p\in P\}|=1$, then $a_i(p)=d_{i+1}(p_i)$ meets the definition for $p\in\overset{\circ}{\downarrow} p_i$. \qed

\begin{lemma} Let $i\in\{0,1,\dots,nh-1\}$.  Let $\alpha\in D_i-\downarrow a_i$ have a unique lower cover $\beta$ in $P^P$ such that $d_{i+1}\le\beta$ and $d_i\nleq\beta$.

Then $\alpha(p_i)>d_i(p_i)$; and $\alpha(p_i)$ has a lower cover $\beta(p_i)$ in $P$ such that $d_{i+1}(p_i)\le\beta(p_i)$ and $d_i(p_i)\nleq\beta(p_i)$.  

Now assume $|\{C_p\mid p\in P\}|=1$.  For $p\in\overset{\circ}{\downarrow} p_i$, $\alpha(p)\le d_{i+1}(p_i)$.  If $\alpha$ is a maximal element of
$$
\{a\in D_i-\downarrow a_i\mid a\text{ has a unique lower cover }b\text{ in }P^P
$$
$$
\text{ such that }d_{i+1}\le b\text{ and }d_i\nleq b\}\text{,}
$$
\noindent then $\alpha>a_i$ and $\alpha(p)=d_{i+1}(p_i)$ for all $p\in\overset{\circ}{\downarrow} p_i$.
\end{lemma}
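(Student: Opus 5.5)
The plan is to first pin down what the covering pair $\beta\lessdot\alpha$ can look like, then exploit the single chain $C$ when $|\{C_p\mid p\in P\}|=1$, and finally handle maximality by showing that $\alpha\vee a_i$ belongs to the same set.

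\emph{Locating the cover.} Since $\beta\lessdot\alpha$, there is $x\in P$ with $\beta=\alpha_{x\mapsto y}$ and $y\lessdot\alpha(x)$. Because $d_{i+1}\le\beta$ and $d_i\nleq\beta$, there is $p$ with $d_i(p)\nleq\beta(p)$ but $d_{i+1}(p)\le\beta(p)$. As in the proof of Proposition 4.6, $d_i$ and $d_{i+1}$ differ only at $p_i$, so $p=p_i$. For $q\ne x$ we have $\beta(q)=\alpha(q)\ge d_i(q)$, so $x=p_i$. This gives the stated properties of $\beta(p_i)$.

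\emph{Showing $\alpha(p_i)>d_i(p_i)$.} We already know $\alpha(p_i)\ge d_i(p_i)$. Suppose $\alpha(p_i)=d_i(p_i)$. Then $d_{i+1}(p_i)\le\beta(p_i)\lessdot d_i(p_i)$ and $d_{i+1}(p_i)\lessdot d_i(p_i)$, so $\beta(p_i)=d_{i+1}(p_i)$. Because $\beta$ is order-preserving, $\alpha(p)=\beta(p)\le d_{i+1}(p_i)$ for all $p<p_i$. Induct downward from $p_i$ exactly as in Proposition 4.6: each $\alpha(p)\in C_p$ lies below $d_{i+1}(p_i)$ and below $\alpha(q)\le a_i(q)$ for the relevant covers $q$, so $\alpha(p)\le a_i(p)$ by the definition of $a_i$ as a maximum. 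For $p\nleq p_i$ we have $\alpha(p)\le 1_{C_p}=a_i(p)$, and at $p_i$ the values agree. Hence $\alpha\le a_i$, contradicting $\alpha\notin\downarrow a_i$.

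\emph{One chain.} Now assume every $C_p$ equals a single chain $C$, and let $p<p_i$. Then $\alpha(p)\in C$, and $d_{i+1}(p_i)\lessdot d_i(p_i)$ are consecutive in $C$. So either $\alpha(p)\le d_{i+1}(p_i)$ or $\alpha(p)\ge d_i(p_i)$. The second option would give $d_i(p_i)\le\alpha(p)=\beta(p)\le\beta(p_i)$, which is impossible. Hence $\alpha(p)\le d_{i+1}(p_i)$.

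\emph{Maximality.} Let $S$ denote the displayed set, and put $\gamma:=\alpha\vee a_i$, which lies in $D_i$ and is computed pointwise by Lemma 4.4.
\begin{enumerate}[(a)]
\item Using Proposition 4.6 ($a_i(p)=d_{i+1}(p_i)$ below $p_i$) and the previous step, compute $\gamma$ pointwise: $\gamma(p_i)=\alpha(p_i)$, $\gamma(p)=d_{i+1}(p_i)$ for $p<p_i$, and $\gamma(p)=1_C$ for $p\nleq p_i$.
\item $\gamma\notin\downarrow a_i$, since $\gamma(p_i)>d_i(p_i)=a_i(p_i)$.
\item Describe the admissible lower covers. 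By the first step, any lower cover of $\alpha$, or of $\gamma$, with the required properties is obtained by changing only the value at $p_i$ to some $y\lessdot\alpha(p_i)$ with $d_{i+1}(p_i)\le y$ and $d_i(p_i)\nleq y$. Such a modified map is order-preserving exactly when $y$ dominates the values below $p_i$; upward there is no issue, since $y<\alpha(p_i)\le$ the values above $p_i$. The values below $p_i$ are all $\le d_{i+1}(p_i)\le y$ for both $\alpha$ and $\gamma$. Hence the admissible $y$ form the same set for $\alpha$ and for $\gamma$.
\item Uniqueness of $\beta$ therefore transfers, and $\gamma\in S$.
\end{enumerate}
Since $\gamma\ge\alpha$ and $\alpha$ is maximal, $\gamma=\alpha$. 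Thus $\alpha\ge a_i$, with $\alpha\ne a_i$ because their values at $p_i$ differ, and $\alpha(p)=\gamma(p)=d_{i+1}(p_i)$ for $p\in\overset{\circ}{\downarrow} p_i$.

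The main obstacle I expect is step (c): checking carefully that the unique-lower-cover condition is preserved when passing from $\alpha$ to $\gamma$.
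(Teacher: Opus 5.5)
Your proof is correct and follows essentially the paper's argument: the same localization of the admissible cover at $p_i$, and the same downward comparison of $\alpha$ with $a_i$, which you phrase as an induction where the paper instead takes a maximal $p$ with $\alpha(p)\nleq a_i(p)$. In the maximality step, your $\gamma=\alpha\vee a_i$ is exactly the map $a$ that the paper defines explicitly, and the unique-lower-cover property transfers between $\alpha$ and $\gamma$ for the same reason as in the paper: both admit the same set of replacement values $y$ at $p_i$.
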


\proof Let $q\in P$ be such that $d_i(q)\nleq\beta(q)$.  Since $d_i(q)\le\alpha(q)$, we have $\beta(q)\lessdot\alpha(q)$.  Since $d_{i+1}(q)\le\beta(q)$, we have $q=p_i$.

Let $p\in P$ be maximal such that $\alpha(p)\nleq a_i(p)$.  Since $\alpha(p)\in C_p$, we know $p\le p_i$.  Assume for a contradiction that $p<p_i$.

We have $\alpha(p_i)\ge d_i(p_i)$ and, by maximality of $p$, 
$$
\alpha(p_i)\le a_i(p_i)=d_i(p_i).
$$
\noindent  Hence $\alpha(p_i)=d_i(p_i)$.

Now $d_i(p_i)=\alpha(p_i)\gtrdot\beta(p_i)\ge d_{i+1}(p_i)$, so $\beta(p_i)=d_{i+1}(p_i)$.  Thus $\alpha(p)=\beta(p)\le\beta(p_i)=d_{i+1}(p_i)$ and $\alpha(p)\in C_p$, so by the inductive definition of $a_i$, $\alpha(p)\le a_i(p)$, a contradiction.

Hence $\alpha(p_i)\nleq a_i(p_i)=d_i(p_i)$.

Now assume $C=C_p$ for all $p\in P$.  Define $a:P\to P$ for all $p\in P$ by
$$
a(p)=\begin{cases}
1_C\ \text{if}\ p\nleq p_i\text{;}\\
\alpha(p_i)\ \text{if}\ p=p_i\text{;}\\
d_{i+1}(p_i)\ \text{if}\ p<p_i\text{.}
\end{cases}
$$
\noindent Then $a\in P^P$.

Let $p\in\overset{\circ}{\downarrow} p_i$.  Then $\alpha(p)\in C_p$ and $\alpha(p)\le\beta(p_i)$. Since $d_i(p_i)\nleq\beta(p_i)$, then $d_i(p_i)\nleq\alpha(p)$, so $\alpha(p)<d_i(p_i)$ in $C$, and hence $\alpha(p)\le d_{i+1}(p_i)$.

Thus $\alpha\le a$.  Obviously $a\nleq a_i$.  Define $b:=a_{p_i\mapsto\beta(p_i)}$.  Then $b\in P^P$ and $b\lessdot a$.  We have $d_{i+1}\le b$ and $d_i\nleq b$.

Now assume $\widehat b\in P^P$ is such that $\widehat b\lessdot a$; $d_{i+1}\le\widehat b$; and $d_i\nleq\widehat b$.  As in the first paragraph of this proof, $\widehat b(p_i)\lessdot a(p_i)=\alpha(p_i)$; $d_i(p_i)\nleq\widehat b(p_i)$; and $d_{i+1}(p_i)\le\widehat b(p_i)$.

Thus $\alpha_{p_i\mapsto\widehat b(p_i)}\in P^P$; $\alpha_{p_i\mapsto\widehat b(p_i)}\lessdot\alpha$; $d_{i+1}\le\alpha_{p_i\mapsto\widehat b(p_i)}$; and 
$$
d_i\nleq \alpha_{p_i\mapsto\widehat b(p_i)}.
$$
\noindent By uniqueness, $\alpha_{p_i\mapsto\widehat b(p_i)}=\beta$ so $\widehat b(p_i)=\beta(p_i)$ and hence $\widehat b=b$.

Thus $a$ has the uniqueness property.

Now assume $\alpha$ has the maximality property in the statement of the lemma. By maximality, $\alpha=a$.

We see that $\alpha>a_i$. \qed

\begin{proposition} Assume that $P$ and $\overline P$ are trebled ordered sets.  Let 
$$
\phi:P^P\to\overline P^{\overline P}
$$
\noindent be an order-isomorphism.

For $j\in\{0,1,\dots,nh\}$, let $\overline d_j:=\phi(d_j)$.  (Note that $nh=\overline n\overline h$.)  Let $\overline D:=\{\overline d_0,\overline d_1,\dots,\overline d_{nh}\}$.  Fix $i\in\{0,1,\dots,nh-1\}$ and assume $\phi[D_i]=\overline D_i$.

Assume that $|\{C_p\mid p\in P\}|=1$.

Then $a_i$ is the largest element $\alpha\in D_i$ that has a unique lower cover $\beta\in P^P$ such that $d_{i+1}\le\beta$ and $d_i\nleq\beta$.

Furthermore, $\overline a_i$ is the largest element $\overline\alpha\in\overline D_i$ that has a unique lower cover $\overline\beta\in{\overline P}^{\overline P}$ such that $\overline d_{i+1}\le\overline\beta$ and $\overline d_i\nleq\overline\beta$.

We have $\phi(a_i)=\overline a_i$ and $\phi[D_{i+1}]=\overline D_{i+1}$.
\end{proposition}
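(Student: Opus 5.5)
The plan is to give an order-theoretic characterization of $a_i$ inside $D_i$ that refers only to $P^P$, $d_i$ and $d_{i+1}$, and then transport it through $\phi$. The characterization should not depend on the hypothesis $|\{C_p\mid p\in P\}|=1$, so that it also applies to $\overline P$, for which that hypothesis is not given. I expect to use only the trebled structure together with the first part of Lemma 4.7, which does not need that hypothesis.

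\emph{Step 1: $a_i$ is the largest element with the unique-lower-cover property.} By Proposition 4.6, $a_i\in D_i$ has exactly one lower cover $b$ with $b\ge d_{i+1}$ and $b\ngeq d_i$, namely $b_i$. It remains to show that every $\alpha\in D_i$ with this property satisfies $\alpha\le a_i$. Suppose instead that $\alpha\in D_i-\downarrow a_i$ has a unique such lower cover $\beta$. By the first part of Lemma 4.7, $\alpha(p_i)>d_i(p_i)$, and $\beta=\alpha_{p_i\mapsto\beta(p_i)}$ with $\beta(p_i)\lessdot\alpha(p_i)$, $d_{i+1}(p_i)\le\beta(p_i)$ and $d_i(p_i)\nleq\beta(p_i)$. (Here I use that a lower cover in $P^P$ differs at a single point, and that point must be $p_i$, as in the first paragraph of the proof of Lemma 4.7.)

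Now I exploit trebling. Let $y$ be any triplet of $\beta(p_i)$. Then $y$ has the same strict up-set and strict down-set as $\beta(p_i)$. Hence $\alpha_{p_i\mapsto y}$ is still order-preserving, and it is a lower cover of $\alpha$ since $y\lessdot\alpha(p_i)$. It also satisfies $y\ge d_{i+1}(p_i)$: this is clear if $d_{i+1}(p_i)<\beta(p_i)$, and the equality case is excluded because $\beta(p_i)=d_{i+1}(p_i)\lessdot d_i(p_i)<\alpha(p_i)$ would contradict $\beta(p_i)\lessdot\alpha(p_i)$. It remains to check that $y\ngeq d_i(p_i)$, and here I split into two cases.
\begin{enumerate}[(a)]
\item If $\beta(p_i)$ is not a triplet of $d_i(p_i)$, then $d_i(p_i)\le y$ would force $d_i(p_i)<y$, hence $d_i(p_i)<\beta(p_i)$, which is false. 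So all three triplets of $\beta(p_i)$ give valid lower covers.
\item If $\beta(p_i)$ is a triplet of $d_i(p_i)$, then the two triplets of $d_i(p_i)$ other than $d_i(p_i)$ itself both give valid lower covers.
\end{enumerate}
In either case there are at least two distinct such lower covers, contradicting uniqueness. Hence $a_i$ is the largest element of $D_i$ with the property.

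\emph{Step 2: the statement for $\overline P$ and transport through $\phi$.} Step 1 used only that the ordered set is trebled and the first part of Lemma 4.7. Running the same argument for $\overline P$ shows that $\overline a_i$ is the largest element of $\overline D_i$ with the analogous property relative to $\overline d_i,\overline d_{i+1}$. The order-isomorphism $\phi$ preserves covers and order, sends $d_j$ to $\overline d_j$, and by assumption maps $D_i$ onto $\overline D_i$. It therefore carries the defining property of $a_i$ to that of $\overline a_i$, so $\phi(a_i)=\overline a_i$. Likewise $\phi(b_i)=\overline b_i$, since $b_i$ and $\overline b_i$ are the unique such lower covers.

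\emph{Step 3: $\phi[D_{i+1}]=\overline D_{i+1}$.} By Proposition 4.6, $D_{i+1}=D_i\cup\{b_i\wedge e\mid e\in D_i\}$, and the meets exist in $P^P$. An order-isomorphism preserves existing meets, so
$$
\phi[D_{i+1}]=\overline D_i\cup\{\overline b_i\wedge\overline e\mid \overline e\in\overline D_i\}.
$$
This equals $\overline D_{i+1}$ by Proposition 4.6 applied to $\overline P$.

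The main obstacle is Step 1: checking that the substituted maps $\alpha_{p_i\mapsto y}$ are genuinely distinct valid lower covers, which is exactly where the triplet structure enters.
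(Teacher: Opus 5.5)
Your Step 1 has a genuine gap. It sits exactly where the hypothesis $|\{C_p\mid p\in P\}|=1$ is needed, so Step 2 does not go through.

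\textbf{The gap.} You assert that $\alpha_{p_i\mapsto y}$ is order-preserving because the triplet $y$ has the same strict up- and down-sets as $\beta(p_i)$. That only works if no $q$ comparable to $p_i$ has $\alpha(q)=\beta(p_i)$. For $q>p_i$ this is automatic. For $q<p_i$ you only know $\alpha(q)=\beta(q)\le\beta(p_i)$, and if equality holds then $\alpha(q)\nleq y$ for every triplet $y\neq\beta(p_i)$.

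\textbf{A counterexample.} Without the hypothesis, the characterization you claim is false. Let $P=\widetilde{\mathbf 3}$ with levels $\{u_k,v_k,w_k\}$ for $k=0,1,2$. Build $D$ from $\langle u_2\rangle$ as follows: lower $u_0$ via $u_2\to v_1\to u_0$; then lower $v_0$, $w_0$, $u_1$ each via $u_2\to u_1\to u_0$; then the remaining points similarly. Let $i$ be the step at which $u_1$ drops from $u_1$ to $u_0$.
\begin{enumerate}[(i)]
\item Then $p_i=u_1$, $C_{u_0}=\{u_0,v_1,u_2\}\neq C_{u_1}=\{u_0,u_1,u_2\}$, and $a_i=d_i$ (level $0$ goes to $u_0$, $u_1$ goes to $u_1$, everything else to $u_2$).
\item Let $\alpha$ send $u_0\mapsto v_1$, $v_0,w_0\mapsto u_0$, and all of levels $1,2$ to $u_2$. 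Then $\alpha\in D_i$ and $\alpha\nleq a_i$.
\item Any lower cover $\beta$ of $\alpha$ with $d_{i+1}\le\beta$, $d_i\nleq\beta$ must change $\alpha$ only at $u_1$, to some $y\lessdot u_2$ with $y\neq u_1$ and $y\ge\alpha(u_0)=v_1$. So $\alpha_{u_1\mapsto v_1}$ is the unique one.
\end{enumerate}
Thus $\alpha$ has the property but $a_i$ is not the largest such element. This is your case (b): $\alpha_{u_1\mapsto w_1}$ fails to be order-preserving at $u_0<u_1$.

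\textbf{Repairing Step 1 for $P$.} Here the hypothesis does the work. Suppose all $C_p$ equal $C$ and $\alpha(q)=\beta(p_i)$ for some $q<p_i$. Then $\beta(p_i)\in C$ is comparable with $d_i(p_i)$, so $\beta(p_i)<d_i(p_i)$, hence $\beta(p_i)=d_{i+1}(p_i)$. This contradicts $\beta(p_i)\lessdot\alpha(p_i)$ together with $\alpha(p_i)>d_i(p_i)$. The paper reaches the same point via the second half of Lemma 4.7 applied to a maximal counterexample, which gives $\alpha(q)=d_{i+1}(p_i)<\beta(p_i)$.

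\textbf{Why Step 2 fails.} Nothing gives $|\{\overline C_{\overline p}\}|=1$ for $\overline D=\phi[D]$, since constants need not go to constants. The example above shows that the $\overline P$-side statement cannot be obtained by ``running the same argument.''

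\textbf{What the paper does instead.} It goes in the opposite direction.
\begin{enumerate}[(i)]
\item The $P$-side characterization gives $\phi^{-1}(\overline a_i)\le a_i$, that is, $\overline\alpha:=\phi(a_i)\ge\overline a_i$.
\item Assuming $\overline\alpha>\overline a_i$, it derives a contradiction inside ${\overline P}^{\overline P}$. It splits into four cases according to where $\overline d_{i+1}(\overline p_i)$, $\overline d_i(\overline p_i)$, $\overline\alpha(\overline p_i)$ sit in $\overline C_{\overline p_i}$.
\item The tools are trebledness of $\overline P$ plus structure transported by $\phi$: existence of $\overline\beta\wedge\overline x$ and $\overline\beta\vee\overline x$ for $\overline x\in\overline D_i$, and distributivity and semimodularity of $\phi[D_{i+1}]$.
\end{enumerate}
The ``Furthermore'' clause is then a consequence of $\phi(a_i)=\overline a_i$, not an input to it. That case analysis is the missing core of your proof. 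Your Step 3 is fine once $\phi(a_i)=\overline a_i$ is established.
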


\proof Assume for a contradiction that there exists $\alpha$ maximal in $D_i$ with respect to the properties that (1) $\alpha\nleq a_i$ and (2) $\alpha$ has a unique lower cover $\beta\in P^P$ such that $d_{i+1}\le\beta$ and $d_i\nleq\beta$.  By Lemma 4.7, $\alpha>a_i$ and $\alpha(p)=d_{i+1}(p_i)$ for all $p\in\overset{\circ}{\downarrow} p_i$.

Note that $\beta(p_i)>d_{i+1}(p_i)$ since $\beta(p_i)\ge d_{i+1}(p_i)$ and 
$$
d_{i+1}(p_i)<d_i(p_i)=a_i(p_i)<\alpha(p_i)
$$
\noindent (see Lemma 4.7) but $\beta(p_i)\lessdot\alpha(p_i)$.  Without loss of generality, $d_i(p_i)\nleq\widetilde{\beta(p_i)}$, because we have triplets.  (See Lemma 4.7.)

Then $\beta_{p_i\mapsto\widetilde{\beta(p_i)}}\in P^P$ (since $\alpha(p)=\beta(p)$ for $p\in P-\{p_i\}$); $\beta_{p_i\mapsto\widetilde{\beta(p_i)}}\lessdot\alpha$; $d_{i+1}\le \beta_{p_i\mapsto\widetilde{\beta(p_i)}}$; and $d_i\nleq\beta_{p_i\mapsto\widetilde{\beta(p_i)}}$.  By uniqueness, $\beta=\beta_{p_i\mapsto\widetilde{\beta(p_i)}}$, so $\beta(p_i)= \widetilde{\beta(p_i)}$, a contradiction.

Now let $\overline\alpha:=\phi(a_i)$.

Assume for a contradiction that $\overline\alpha\ne\overline a_i$.

By what we have shown, $\overline\alpha>\overline a_i$.

Let $\overline\beta$ be the lower cover of $\overline\alpha$ in ${\overline P}^{\overline P}$ such that $\overline d_{i+1}\le\overline\beta$ but $\overline d_i\nleq\overline\beta$.  Let $\overline p_i$ be the element $\overline p\in\overline P$ such that $\overline d_{i+1}(\overline p)\ne\overline d_i(\overline p)$.  We know $\overline\beta(\overline p_i)\lessdot\overline\alpha(\overline p_i)$.

\quad {\it Case 1. $\overline d_{i+1}(\overline p_i)\ne 0_{\overline C_{\overline p_i}}$}

For all $x\in D_i$, $b_i\wedge x$ exists in $P^P$ by Proposition 4.6.  Hence, for all $\overline x\in\overline D_i$, $\overline\beta\wedge\overline x$ exists in ${\overline P}^{\overline P}$.  Define $\overline f:\overline P\to\overline P$ for all $\overline p\in\overline P$ as follows:
$$
\overline f(\overline p):=\begin{cases}
\widetilde{\overline d_{i+1}(\overline p_i)}\ \text{if}\ \overline p\ge \overline p_i\text{;}\\
0_{\overline C_{\overline p}}\ \text{if}\ \overline p\ngeq \overline p_i\text{.}
\end{cases}
$$
\noindent We show that $\overline f\in{\overline P}^{\overline P}$: First use Observation 4.3.  Next, let $\overline q,\overline r\in\overline P$ be such that $\overline q<\overline r$.  If $\overline q\ngeq\overline p_i$ and $\overline r\ge\overline p_i$, then, as $\overline d_{i+1}(\overline p_i)\in\overline C_{\overline p_i}$, but $\overline d_{i+1}(\overline p_i)\ne0_{\overline C_{\overline p_i}}$, we have $0_{\overline C_{\overline q}}=0_{\overline C_{\overline r}}=0_{\overline C_{\overline p_i}}<\overline d_{i+1}(\overline p_i)$, so $\overline f(\overline q)\le\overline f(\overline r)$.

As $\overline\beta(\overline p_i)>\overline d_{i+1}(\overline p_i)$ (because by Lemma 4.7, 
$$
\overline\alpha(\overline p_i)>\overline d_i(\overline p_i)\text{;}
$$
\noindent $\overline d_i(\overline p_i)>\overline d_{i+1}(\overline p_i)$; $\overline\beta(\overline p_i)\ge\overline d_{i+1}(\overline p_i)$; and $\overline\beta(\overline p_i)\lessdot\overline\alpha(\overline p_i)$), we have $\overline\beta(\overline p_i)>\widetilde{\overline d_{i+1}(\overline p_i)}$.  Let $\overline p\in\overline P$.  If $\overline p\ge\overline p_i$, then $\overline\beta(\overline p)\ge\overline\beta(\overline p_i)>\widetilde{\overline d_{i+1}(\overline p_i)}=\overline f(\overline p)$.  If $\overline p\ngeq\overline p_i$, then $\overline p\ne\overline p_i$, so $\overline\beta(\overline p)=\overline\alpha(\overline p)\in\overline C_{\overline p}$ and $\overline\beta(\overline p)\ge\overline f(\overline p)$.  Hence $\overline\beta\ge\overline f$.

Let $\overline p\in\overline P$.  If $\overline p\ge\overline p_i$, then $\overline d_i(\overline p)\ge\overline d_i(\overline p_i)>\overline d_{i+1}(\overline p_i)$, so $\overline d_i(\overline p)>\overline f(\overline p)$.  If $\overline p\ngeq \overline p_i$, then $\overline d_i(\overline p)\in\overline C_{\overline p}$, so $\overline d_i(\overline p)\ge\overline f(\overline p)$.  Hence $\overline d_i\ge\overline f$.

Thus $\overline d_{i+1}=\overline\beta\wedge\overline d_i\ge\overline f$, so $\overline d_{i+1}(\overline p_i)\ge\overline f(\overline p_i)=\widetilde{\overline d_{i+1}(\overline p_i)}$, a contradiction.

\quad {\it Case 2. $\overline \alpha(\overline p_i)\ne 1_{\overline C_{\overline p_i}}$}

For all $x\in D_i$, $b_i\vee x$ exists in $P^P$.  Hence, for all $\overline x\in\overline D_i$, $\overline\beta\vee\overline x$ exists in ${\overline P}^{\overline P}$.  Indeed, $\overline\beta\vee\overline d_i=\overline\alpha$.

Note that $\overline\alpha(\overline p_i)>\overline\beta(\overline p_i),\overline d_i(\overline p_i)$ and for all $p\in\overset{\circ}{\downarrow} p_i$, $\overline\alpha(\overline p)\le\overline\beta(\overline p_i)$.

Define $\overline g:\overline P\to \overline P$ for all $\overline p\in\overline P$ by
$$
\overline g(\overline p):=\begin{cases}
\widetilde{\overline \alpha(\overline p_i)}\ \text{if}\ \overline p\le \overline p_i\text{;}\\
1_{\overline C_{\overline p}}\ \text{if}\ \overline p\nleq \overline p_i\text{.}
\end{cases}
$$
\noindent We show that $\overline g\in{\overline P}^{\overline P}$.  First, use Observation 4.3.  Next, let $\overline q,\overline r\in\overline P$ be such that $\overline q<\overline r$; $\overline q\le\overline p_i$; and $\overline r\nleq\overline p_i$.  Since $\overline\alpha\in\overline D_i$, we have $\alpha(\overline p_i)\in\overline C_{\overline p_i}$.  Since $\overline\alpha(\overline p_i)\ne 1_{\overline C_{\overline p_i}}$, we have $\overline g(\overline q)=\widetilde{\overline\alpha(\overline p_i)}<1_{\overline C_{\overline p_i}}=1_{\overline C_{\overline q}}=1_{\overline C_{\overline r}}=\overline g(\overline r)$ by Observation 4.3.

Let $\overline p\in\overline P$.  If $\overline p\le\overline p_i$, then $\overline\beta(\overline p)\le\overline\beta(\overline p_i)<\widetilde{\overline\alpha(\overline p_i)}=\overline g(\overline p)$ and 
$$
\overline d_i(\overline p)\le\overline d_i(\overline p_i)<\widetilde{\overline\alpha(\overline p_i)}=\overline g(\overline p)
$$
\noindent (using Lemma 4.7).  If $\overline p\nleq\overline p_i$, then $\overline d_i(\overline p)\le\overline\alpha(\overline p)$; but $1_{\overline C_{\overline p}}=\overline a_i(\overline p)\le\overline\alpha(\overline p)\in\overline C_{\overline p}$, so $\overline\beta(\overline p)=\overline\alpha(\overline p)\le\overline g(\overline p)$ and $\overline d_i(\overline p)\le\overline g(\overline p)$.  Hence $\overline\beta,\overline d_i\le\overline g$.  Therefore $\overline\alpha\le\overline g$, so $\overline\alpha(\overline p_i)\le\overline g(\overline p_i)=\widetilde{\overline\alpha(\overline p_i)}$, a contradiction.

\quad {\it Case 3. It is not the case that $\overline d_i(\overline p_i) \lessdot\overline\alpha(\overline p_i)$}

Since $\overline d_i(\overline p_i),\overline\alpha(\overline p_i)\in\overline C_{\overline p_i}$, there exists $\overline t\in\overline C_{\overline p_i}$ such that 
$$
\overline d_i(\overline p_i)<\overline t<\overline\alpha(\overline p_i).
$$

We show that $(\overline a_i)_{\overline p_i\mapsto\overline t}\in{\overline P}^{\overline P}$.  Let $\overline q,\overline r\in\overline P$ be such that $\overline q<\overline r$.  If $\overline q=\overline p_i$, then $(\overline a_i)_{\overline p_i\mapsto\overline t}(\overline r)=\overline a_i(\overline r)=1_{\overline C_{\overline r}}=1_{\overline C_{\overline p_i}}>\overline t=(\overline a_i)_{\overline p_i\mapsto\overline t}(\overline q)$.  If $\overline r=\overline p_i$, then $(\overline a_i)_{\overline p_i\mapsto\overline t}(\overline q)=\overline a_i(\overline q)\le\overline d_{i+1}(\overline p_i)<\overline t=(\overline a_i)_{\overline p_i\mapsto\overline t}(\overline r)$.

Note that $(\overline a_i)_{\overline p_i\mapsto\overline t}\in\overline D_i$ and $(\overline a_i)_{\overline p_i\mapsto\overline t}\ge\overline d_i,\overline a_i$.

\quad {\it Claim 1. $\overline\beta\wedge\overline a_i=\overline b_i$.}

{\it Proof of claim.}  We have $\overline\alpha>\overline a_i$ and $\overline\beta(\overline p_i)\ge\overline d_{i+1}(\overline p_i)=\overline b_i(\overline p_i)$, so $\overline\beta\ge\overline b_i$, but $\overline\beta\ngeq\overline d_i$ and $\overline a_i\ge\overline d_i$, so $\overline\beta\ngeq\overline a_i$. Use Proposition 4.6 for the existence of the meet.\qed

\quad {\it Claim 2. There exists $\overline u\in\overline P$ such that $\overline d_{i+1}(\overline p_i)<\overline u<\overline\beta(\overline p_i)$ and $\overline u\lessdot\overline t$ for which $(\overline a_i)_{\overline p_i\mapsto\overline u}$ is in ${\overline P}^{\overline P}$ and equals $\overline\beta\wedge(\overline a_i)_{\overline p_i\mapsto\overline t}$ in ${\overline P}^{\overline P}$.}

{\it Proof of claim.}  In ${\overline P}^{\overline P}$, $\overline\beta\wedge(\overline a_i)_{\overline p_i\mapsto\overline t}$ exists by Proposition 4.6, and
$$
\overline b_i=\overline\beta\wedge \overline a_i\le\overline\beta\wedge(\overline a_i)_{\overline p_i\mapsto\overline t}\le\overline\beta.
$$
\noindent  For $\overline p\in\overline P-\{\overline p_i\}$, $\overline a_i(\overline p)=\overline b_i(\overline p)\le[\overline\beta\wedge(\overline a_i)_{\overline p_i\mapsto\overline t}](\overline p)\le(\overline a_i)_{\overline p_i\mapsto\overline t}(\overline p)=\overline a_i(\overline p)$.  Let $\overline u:=[\overline\beta\wedge(\overline a_i)_{\overline p_i\mapsto\overline t}](\overline p_i)$.

Note that $\overline d_{i+1}(\overline p_i)\le\overline u\le\overline\beta(\overline p_i),\overline t$.  Since $\overline d_i(\overline p_i)\le\overline t$ but $\overline d_i(\overline p_i)\nleq\overline\beta(\overline p_i)$, we have $\overline u<\overline t$.  Since $\overline\beta(\overline p_i)\lessdot\overline\alpha(\overline p_i)$ but $\overline u<\overline t<\overline\alpha(\overline p_i)$, then $\overline u<\overline\beta(\overline p_i)$.

Assume for a contradiction that $\overline u=\overline d_{i+1}(\overline p_i)$.  Then $(\overline a_i)_{\overline p_i\mapsto\overline u}=\overline b_i$ so $\overline\beta\wedge(\overline a_i)_{\overline p_i\mapsto\overline t}=\overline\beta\wedge\overline a_i$ (by Claim 1) but $\overline a_i<(\overline a_i)_{\overline p_i\mapsto\overline t}$.  Also,
$$
\overline\beta\vee(\overline a_i)_{\overline p_i\mapsto\overline t}=\overline\beta\vee\overline a_i=\overline\alpha.
$$  
\noindent(The joins exist by Lemma 4.4 applied to $D_{i+1}$.) But $\{\overline\beta,\overline\alpha,(\overline a_i)_{\overline p_i\mapsto\overline t},\overline b_i,\overline a_i\}$ is a subset of a distributive lattice order-isomorphic to $D_{i+1}$ (use Claim 1 and Lemma 4.4 for $D_{i+1}$ to see $\overline b_i\in\phi[D_{i+1}]$) even though $\overline a_i<(\overline a_i)_{\overline p_i\mapsto\overline t}$.  (See Figure 4.1.) This is a contradiction \cite[Exercise 4.9]{DavPriJB}. 

\begin{center}

    \begin{tikzpicture}[scale=.45]

    \draw[fill] (0,12) circle (.05cm);
    \draw (0,12) -- (-6,6);
    \draw (0,12) -- (4,8);
    \draw (4,8) -- (4,4);
    \draw (-6,6) -- (0,0);
    \draw (4,4) -- (0,0);

    \draw[fill] (0,12) circle (.05cm);
    \draw[fill] (-6,6) circle (.05cm);
    \draw[fill] (4,8) circle (.05cm);
    \draw[fill] (4,4) circle (.05cm);

    \draw (0,13) node {$\overline\alpha$};
    \draw (-7,6) node {$\overline\beta$};
    \draw (5,8) node {$(\overline a_i)_{\overline p_i\mapsto\overline t}$};
    \draw (5,4) node {$\overline a_i$};
    \draw (0,-1) node {$\overline b_i$};

    \draw (-1,-2) node {\bf Figure 4.1};

    \end{tikzpicture}
    
\end{center}

We still have that $\{\overline\beta,\overline\alpha,(\overline a_i)_{\overline p_i\mapsto\overline t}\}$ is a subset of a distributive lattice order-isomorphic to $D_{i+1}$.  Since $\overline\beta\lessdot\overline\alpha$, by semimodularity $(\overline a_i)_{\overline p_i\mapsto\overline u}\lessdot(\overline a_i)_{\overline p_i\mapsto\overline t}$, and hence $\overline u\lessdot\overline t$. \qed

Now $\overline d_i\vee(\overline a_i)_{\overline p_i\mapsto\overline u}$ exists (by Lemma 4.4 for $D_{i+1}$ and Claim 2) and equals $(\overline a_i)_{\overline p_i\mapsto\overline t}$ since $(\overline a_i)_{\overline p_i\mapsto\overline u}\ngeq\overline d_i$; but $\overline d_i,(\overline a_i)_{\overline p_i\mapsto\overline u}\le(\overline a_i)_{\overline p_i\mapsto\widetilde{\overline t}}\in{\overline P}^{\overline P}$, so $\overline t\le\widetilde{\overline t}$, a contradiction.

\quad {\it Case 4. $1_{\overline C_{\overline p_i}}=\overline\alpha(\overline p_i)\gtrdot\overline d_i(\overline p_i)\gtrdot\overline d_{i+1}(\overline p_i)=0_{\overline C_{\overline p_i}}$}

This implies $\h(\overline P)=2$.

Because $\overline P$ is trebled, there exists $\overline v\in\overline P-\{\overline\beta(\overline p_i),\overline d_i(\overline p_i)\}$ such that $0_{\overline C_{\overline p_i}}\lessdot\overline v\lessdot 1_{\overline C_{\overline p_i}}$.

Now $\overline\beta(\overline p_i)\lessdot\overline\alpha(\overline p_i)$; $\overline\beta(\overline p_i)\ne\overline d_i(\overline p_i)$; and $\overline d_i(\overline p_i)\in\overline C_{\overline p_i}$, so $\overline\beta(\overline p_i)\notin\overline C_{\overline p_i}$.  Since $\overline\alpha\in\overline D_i$, $\overline\beta\ge\overline d_{i+1}$, and $\overline\alpha\ge\overline a_i$, we have for all $\overline p\in\overline P$ (noting that $\overline\beta(\overline p)=\overline\alpha(\overline p)$ for all $\overline p\in P-\{\overline p_i\}$)
$$
\overline\beta(\overline p)=\begin{cases}
\overline\beta(\overline p_i)\ \text{if}\ \overline p=\overline p_i\text{;}\\
0_{\overline C_{\overline p_i}}\ \text{if}\ \overline p<\overline p_i\text{;}\\
1_{\overline C_{\overline p}}\ \text{if}\ \overline p\nleq \overline p_i\text{;}
\end{cases}
$$
\noindent and
$$
\overline\alpha(\overline p)=\begin{cases}
\overline\alpha(\overline p_i)\ \text{if}\ \overline p=\overline p_i\text{;}\\
0_{\overline C_{\overline p_i}}\ \text{if}\ \overline p<\overline p_i\text{;}\\
1_{\overline C_{\overline p}}\ \text{if}\ \overline p\nleq \overline p_i\text{.}
\end{cases}
$$

Then $\overline\alpha_{\overline p_i\mapsto\overline v}=\overline\beta_{\overline p_i\mapsto\overline v}\in{\overline P}^{\overline P}$ since (Observation 4.3) $1_{\overline C_{\overline p}}=\overline\alpha(\overline p_i)=1_{\overline C_{\overline p_i}}$ for $\overline p\in\overset{\circ}{\downarrow} \overline p_i$ and $0_{\overline C_{\overline p_i}}\le1_{\overline C_{\overline r}}$ for all $\overline q,\overline r\in\overline P$ such that $\overline q<\overline r,\overline p_i$; and $1_{\overline C_{\overline q}}=1_{\overline C_{\overline r}}$ for all $\overline q,\overline r\in\overline P$ such that $\overline q<\overline r$.

Also, $\overline\alpha_{\overline p_i\mapsto\overline v}\lessdot\overline\alpha$ but $\overline\alpha_{\overline p_i\mapsto\overline v}\ne\overline\beta$, and $\overline d_{i+1}\le\overline\alpha_{\overline p_i\mapsto\overline v}$ but $\overline d_i\nleq\overline\alpha_{\overline p_i\mapsto\overline v}$.  By uniqueness, $\overline\alpha_{\overline p_i\mapsto\overline v}=\overline\beta$, a contradiction.

We conclude that $\phi(a_i)=\overline a_i$, so $\phi(b_i)=\overline b_i$ and hence by Proposition 4.6 $\phi[D_{i+1}]=\overline D_{i+1}$. \qed

\begin{corollary} Assume $P$ and $\overline P$ are trebled ordered sets.  Assume that
$$
|\{C_p\mid p\in P\}|=1.
$$

Then $\phi[D_{nh}]=\overline D_{\overline n\overline h}$. 
\end{corollary}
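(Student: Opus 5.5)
The plan is to argue by induction on $i\in\{0,1,\dots,nh\}$ that $\phi[D_i]=\overline D_i$, where $\overline D_i$ is formed from the chain $\overline D=\{\overline d_0,\dots,\overline d_{nh}\}$ with $\overline d_j=\phi(d_j)$ as in Proposition 4.8. First, $\overline D$ must be a maximum-sized chain in $\overline P^{\overline P}$, so that the objects of Notation 4.1 built from it (the $\overline p_i$, the chains $\overline C_{\overline p}$, the sets $\overline D_i$, and the maps $\overline a_i,\overline b_i$) make sense. This holds because $\phi$ is an order-isomorphism, so $\phi[D]$ is a chain of length $nh=\h(P^P)=\h(\overline P^{\overline P})=\overline n\overline h$ by \cite[Lemma 15(4)]{FarBC}. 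In particular its index range agrees with that of $D$, which is why the conclusion reads $\overline D_{\overline n\overline h}$.

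For the base case $i=0$, we have $D_0=\{f\in P^P\mid f\ge d_0,\ f(p)\in C_p\}$. Here $d_0$ is the top of the chain, so $d_0(p)=1_{C_p}$ for every $p$. Hence $D_0=\{d_0\}$, and likewise $\overline D_0=\{\overline d_0\}=\{\phi(d_0)\}$, so $\phi[D_0]=\overline D_0$ holds trivially.

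For the inductive step, assume $\phi[D_i]=\overline D_i$ for some $i<nh$. The hypotheses of Proposition 4.8 are then satisfied: $P$ and $\overline P$ are trebled, $\phi$ is an order-isomorphism, $|\{C_p\mid p\in P\}|=1$, and $\phi[D_i]=\overline D_i$. Proposition 4.8 gives $\phi[D_{i+1}]=\overline D_{i+1}$. After $nh$ steps we reach $\phi[D_{nh}]=\overline D_{nh}=\overline D_{\overline n\overline h}$.

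The only point needing care is that Proposition 4.8 assumes the single-chain condition only on the $P$ side. Its proof does apply Lemma 4.7's conclusions on the $\overline P$ side, but it does so only through the characterization of $\overline a_i$ established for $P$ and transported by $\phi$, together with the trebled structure. So no separate hypothesis on $\overline P$ is required, and the hypotheses of Proposition 4.8 are satisfied verbatim at every stage. Granting Proposition 4.8 as stated, nothing here is a genuine obstacle.
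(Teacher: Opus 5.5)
Your argument is correct and is the paper's own proof: induct on $i$ via Proposition 4.8, with base case $D_0=\{d_0\}$, $\overline D_0=\{\overline d_0\}$ and $\phi(d_0)=\overline d_0$. Your closing claim that the proof of Proposition 4.8 never needs $|\{\overline C_{\overline p}\mid \overline p\in\overline P\}|=1$ is not actually verified, and Case 4's formula $\overline\beta(\overline p)=0_{\overline C_{\overline p_i}}$ for $\overline p<\overline p_i$ appears to rely on it; this does not affect your proof, because you, like the paper, only invoke Proposition 4.8 as stated.
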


\proof We have $D_0=\{d_0\}$, $\overline D_0=\{\overline d_0\}$, and $\phi(d_0)=\overline d_0$.  This starts the induction. \qed

{\it Proof of Theorem 4.1.}  Assume $P$ and $\overline P$ are finite trebled ordered sets such that $P^P\cong{\overline P}^{\overline P}$.  We first prove $P\cong\overline P$.

We are done if $P$ or $\overline P$ is an antichain, so assume $h,\overline h\ge1$.

By Corollary 4.9, there is an order-embedding of ${\bf (h+1)}^P$ into ${\bf (\overline h+1)}^{\overline P}$, so $|{\bf (h+1)}^P|\le|{\bf (\overline h+1)}^{\overline P}|$.  By symmetry, $|{\bf (\overline h+1)}^{\overline P}|\le|{\bf (h+1)}^P|$, so they are equal and the aforementioned order-embedding is an order-isomorphism.

Taking ordered sets of join-irreducibles, we get
$$
P^\partial\times{\bf h}\cong{\overline P}^\partial\times{\bf{\overline h}}.
$$
\noindent The height is $h+h-1=\overline h+\overline h-1$, so $h=\overline h$.  Thus we can cancel the non-empty factor ${\bf h}$ (\cite[(4.3)]{LovFG}, \cite[p. 89]{JonMcKHB}, or \cite[Theorem 3.2.5]{DufGH}) to get $P^\partial\cong\overline P^\partial$, or $P\cong\overline P$. 

Now use the corollary in Section 3. \qed

Constants do not necessarily go to constants: Consider $(\bf2+\bf2)^{\bf 2+ \bf2}$.  This is order-isomorphic to $\bf3\times\bf3+\bf3\times\bf3+\bf3\times\bf3+\bf3\times\bf3$, but only two of these components can be the ones with the constant maps.

The next step would be to eliminate the hypothesis of being trebled, or at least make it ``doubled".  The second author believes the only place where this hypothesis can't be reduced to ``doubled" in this proof is the first part of the proof of Proposition 4.8.

Another idea is to see if $P^P\cong Q^Q$ implies $\widetilde P^{\widetilde P}\cong \widetilde Q^{\widetilde Q}$ for finite ordered sets $P$ and $Q$.

We could have our induction step include the statement that $\phi(a_i)=\overline a_i$---which would mean $\phi(b_i)=\overline b_i$ and hence $\phi[D_{i+1}]=\overline D_{i+1}$---and hope that this enables us to show that $\phi(a_{i+1})=\overline a_{i+1}$.

{\sl Acknowledgements} The first author thanks Dr. Tim Campion for giving him permission to publish his proof in Section 4.

%----------------------------------------------------------------------


\begin{thebibliography}{99}

\bibitem{BirDH} Birkhoff, Garrett: {\it Lattice Theory} {\rm (second edition)}, American Mathematical Society, New York City (1948).

\bibitem{BirFG} Birkhoff, Garrett: {\it Lattice Theory} {\rm (third edition)}, American Mathematical Society, Providence, Rhode Island (1967).

\bibitem{CamBD} Campion, T. (2024, October 15). Answer beneath: If $P$ and $Q$ are finite, non-empty posets and the poset of order-preserving maps $P^P$ is isomorphic to $Q^Q$, must $P$ be isomorphic to $Q$? Retrieved June 16, 2026, from {\tt https://mathoverflow.net/questions/479345/if-p-and-q-are-finite-non-}
{\tt empty-posets-and-the-poset-of-order-preserving-ma}

\bibitem{CamBE} Campion, T. (2025, November 18). Personal communication.

\bibitem{DavPriJB} Davey, B. A.; Priestley, H. A.: {\it Introduction to Lattices and Order} {\rm (second edition)}, Cambridge University Press, Cambridge (2002).

\bibitem{DufGH} Duffus, Dwight Albert: ``Toward a Theory of Finite Ordered Sets,'' Ph.D. thesis, University of Calgary (1978).

\bibitem{DufHD} Duffus, Dwight: ``Powers of Ordered Sets,'' {\it Order} {\bf 1} (1984), 83--92.

\bibitem{DufWilGI} Duffus, Dwight; Wille, Rudolf: ``A Theorem on Ordered Sets of Order-Preserving Mappings,'' {\it Proceedings of the American Mathematical Society} {\bf 76} (1979), 14--16.

\bibitem{FarBC} Farley, Jonathan David: ``Does the Endomorphism Poset $P^P$ Determine Whether a Finite Poset $P$ Is Connected? An Issue Duffus Raised in 1978,'' {\it Mathematica Bohemica} 
{\bf 148} (2023), 435--446.

\bibitem{GraAA} Gr\"atzer, G. {\it Lattice Theory: Foundation}. Birkh\"auser, Basel, Switzerland, 2011.

\bibitem{JonMcKHB} J\'onsson, Bjarni; McKenzie, Ralph: ``Powers of Ordered Sets: Cancellation and Refinement Properties,'' {\it Mathematica Scandinavica} {51} (1982), 87--120.

\bibitem{LovFG} Lov\'asz, L.: ``Operations with Structures,'' {\it Acta Mathematica Academiae Scientiarum Hungaricae} {\bf 18} (1967), 321--328.

\bibitem{SchAF} Schr\"oder, Bernd S. W.: {\it Ordered Sets, An Introduction with Connections from Combinatorics to Topology} {\rm (second edition)}, Birkh\"auser Verlag (2016). 

\end{thebibliography}
\end{document}